\theoremstyle{plain}
\newtheorem{theorem}{Theorem}[section]
\newtheorem{lemma}[theorem]{Lemma}
\newtheorem{corollary}[theorem]{Corollary}
\newtheorem{proposition}[theorem]{Proposition}
\newtheorem{conjecture}[theorem]{Conjecture}
\theoremstyle{definition}
\newtheorem{example}[theorem]{Example}
\newtheorem{hypothesis}[theorem]{Hypothesis}
\theoremstyle{definition}
\newtheorem{definition}[theorem]{Definition}
\newtheorem{remark}[theorem]{Remark}
\DeclareMathOperator{\Aut}{Aut}
\DeclareMathOperator{\Gal}{Gal}
\DeclareMathOperator{\Hom}{Hom}
\DeclareMathOperator{\Spec}{Spec}
\DeclareMathOperator{\N}{N}
\DeclareMathOperator{\coker}{coker}
\newcommand{\CC}{\mathbb{C}}
\newcommand{\GG}{\mathbb{G}}
\newcommand{\QQ}{\mathbb{Q}}
\newcommand{\RR}{\mathbb{R}}
\newcommand{\ZZ}{\mathbb{Z}}
\newcommand{\co}{\mathcal{O}}
\newcommand{\bz}{\mathbb{Z}}
\newcommand{\ord}{\mathrm{ord}}
\begin{document}

\title[]{On Stark elements of arbitrary weight\\ and their $p$-adic families
}

\author{David Burns, Masato Kurihara and Takamichi Sano}

\begin{abstract}
 We develop a detailed arithmetic theory related to special values at arbitrary integers of the Artin $L$-series of linear characters. To do so we define canonical generalized Stark elements of arbitrary `rank' and `weight', thereby
extending the classical theory of Rubin-Stark elements. We then formulate an extension to arbitrary weight of the refined version of the Rubin-Stark Conjecture that we studied in an earlier article and also show that generalized Stark elements
constitute a $p$-adic family by formulating precise conjectural congruence relations between elements of differing weights. We prove both of these  conjectures in several important cases.
\end{abstract}

\address{King's College London,
Department of Mathematics,
London WC2R 2LS,
U.K.}
\email{david.burns@kcl.ac.uk}

\address{Keio University,
Department of Mathematics,
3-14-1 Hiyoshi\\Kohoku-ku\\Yokohama\\223-8522,
Japan}
\email{kurihara@math.keio.ac.jp}

\address{Osaka University,
Department of Mathematics,
1-1 Machikaneyama-cho\\
Toyonaka\\Osaka\\560-0043,
Japan}
\email{tkmc310@gmail.com}

\maketitle

%

\section{Introduction} \label{Intro}


\subsection{}The seminal conjecture of
Stark predicts that canonical elements constructed (unconditionally) from the leading terms at zero of the Artin $L$-series of complex linear characters should belong to
 the rational vector spaces that are spanned by the $r$-th exterior powers of suitable groups of algebraic units, where $r$ denotes the order of vanishing at zero of the
  relevant $L$-series.

It is believed that if Stark's conjecture is true, then these `Stark elements' should constitute a higher rank Euler system for  the multiplicative group $\mathbb{G}_m$ over number fields and so there is considerable interest in studying their detailed arithmetic properties, especially the integrality. 

The basic integral properties of Stark elements were first studied by Stark himself in \cite{stark4}, and then by Tate in \cite{tatebook}, for the case  $r=1$ and then subsequently by Rubin in  \cite{rubinstark} where the so-called `Rubin-Stark Conjecture' was formulated in the setting of general order of vanishing.

More recently, we formulated a strong refinement of the Rubin-Stark Conjecture in \cite[Conj. 7.3]{bks1}, obtained a natural interpretation of this conjecture in terms of a general theory of `arithmetic zeta elements' that was motivated by an earlier approach of Kato to the formulation of generalized Iwasawa main conjectures and derived a series of consequences of our conjecture concerning the detailed algebraic properties of number fields.


In this article we shall use the leading terms at arbitrary integer points of the $L$-series of linear characters to unconditionally define for non-negative integers $r$ and even integers $w$ canonical `(generalized) Stark elements of rank $r$ and weight $w$'. In this context the `rank' relates to the exterior power of the arithmetic module in which the element is constructed and the `weight' to the integer point $j_w := -w/2$ at which one takes the leading term of the $L$-series (so that $w$ is the weight of the associated motive $h^0(\Spec L)(j_w)$). In particular, in weight $0$, our construction recovers the classical theory of Rubin-Stark elements, and hence in weight $0$ and rank $1$ recovers the original constructions of Stark.

Our approach will show that generalized Stark elements of any fixed rank and weight should encode (in a very explicit way) detailed arithmetic information concerning the Galois structure of important \'etale cohomology groups.

In addition, our approach leads naturally to the simultaneous study of generalized Stark elements of differing weights and thereby introduces the perspective of $p$-adic families to the investigation. We remark that this philosophy of $p$-adic families has not hitherto been used in the setting of the Rubin-Stark Conjecture and is we feel worthy of further consideration.

To be a little more precise about the results proved here we fix an odd prime $p$ and a finite abelian extension $L/K$ of number fields. We assume that $L/K$ is
 unramified outside a finite set of places $S$ of $K$ containing all places that are either archimedean or $p$-adic and we set $G := {\rm Gal}(L/K)$.

Then, as a first step, in Conjecture \ref{higherfitt} we predict that
generalized Stark elements of weight $w$ (and of appropriate rank) over $L$  explicitly determine the initial Fitting ideal of the \'etale cohomology group $H^2(\mathcal{O}_{L,S},\ZZ_p(1-j_w))$, regarded as a $\ZZ_p[G]$-module in the natural way.

This conjecture constitutes a natural extension of \cite[Conj. 7.3]{bks1} from the case of weight $0$ to the case of arbitrary weight. In addition, an   interpretation of generalized Stark elements in terms of the theory of arithmetic zeta elements allows us to prove Conjecture \ref{higherfitt} for all absolutely abelian fields and for the minus part of CM-extensions of totally real fields (see Theorems \ref{evidence} and \ref{thm1}).

Next we write $p^{n}$ for the number of $p$-power roots of unity in $L$ and note that the Galois modules $H^2(\mathcal{O}_{L,S},\ZZ/p^{n}(1-j))(j-k)$ and $H^2(\mathcal{O}_{L,S},\ZZ/p^{n}(1-k))$ are isomorphic for any choice of integers $j$ and $k$. In particular, since Conjecture \ref{higherfitt} implies that Stark elements of weight $w$ over $L$ determine the initial Fitting ideal of the $(\ZZ/p^n\ZZ)[G]$-module $H^2(\mathcal{O}_{L,S},\ZZ/p^{n}(1-j_w))$, it suggests Stark elements of differing weights (and fixed rank) should be related by congruences modulo $p^{n}$.

In Conjecture \ref{congruence conjecture} we use algebraic techniques developed in \cite{bks1} to formulate, modulo the assumed validity of a weak version of Conjecture \ref{higherfitt}, a precise and explicit family of congruence relations between Stark elements of differing weights over arbitrary number fields $L$.

We show that this very general family of conjectural congruences recovers upon appropriate specialization a wide variety of results in the literature ranging from the classical congruences of Kummer concerning Bernoulli numbers to the results of Beilinson and H\"uber-Wildeshaus concerning the cyclotomic elements of Deligne-Soul\'e and a more recent conjecture of Solomon concerning certain `explicit reciprocity laws' for Rubin-Stark elements. These various connections allow us, in particular, to derive strong evidence, both theoretical and numerical, in support of Conjecture \ref{congruence conjecture} (see Theorem \ref{cong evidence} and Remark \ref{cong evid rem}).


The main contents of this article is as follows. In \S\ref{section2} we give the definition of generalized Stark elements and then in \S\ref{statement of conjs} we formulate our central conjectures concerning these elements (in Conjectures \ref{higherfitt} and \ref{congruence conjecture}). In \S\ref{zeta evidence}  we give a natural interpretation of generalized Stark elements in terms of the theory of zeta elements and then use this interpretation to prove Conjecture \ref{higherfitt} in some important cases. Finally, in \S\ref{cong evi proof}, we relate special cases of Conjecture \ref{congruence conjecture} to well-known results in the literature and thereby deduce some supporting evidence for it.

\subsection{}\label{notation} For the reader's convenience we end the introduction by collecting together details concerning notation and conventions that are used in the sequel.

\subsubsection{Algebra} Let $E$ be a field of characteristic $0$. For any abelian group $A$, we denote $E\otimes_\ZZ A$ by $EA$. If $A$ is a $\QQ$-vector space, we sometimes denote $E\otimes_\QQ A$ also by $EA$. Similarly, if $E$ is an extension of $\QQ_p$ ($p$ is a prime number) and $A$ is a $\ZZ_p$-module, we denote $E\otimes_{\ZZ_p}A$ and $E\otimes_{\QQ_p} A$ also by $EA$.
 For any integer $m$, we denote $A/mA$ simply by $A/m$.

For a commutative ring $R$ and an $R$-module $M$ we set
$M^\ast := \Hom_R(M,R)$.
If $M$ is a free $R$-module with basis $\{b_1,\ldots,b_r\}$, then for each $i$ with $1\leq i \leq r$ we write $b_i^\ast$ for the homomorphism $M\to R$ that sends $b_j$ to $1$ if $i=j$ and to $0$ if $i\neq j$.

For any field $E$, the absolute Galois groups is denoted by $G_E$. Let $c\in G_\RR$ denote the complex conjugation. For a $\ZZ[G_\RR]$-module $M$, let $M^\pm$ be the submodule $\{ a \in M \mid c\cdot a =\pm a\}$ of $M$. We also use the idempotent $e^\pm :=\frac{1\pm c}{2}$ of $\ZZ[\frac12][G_\RR]$ and the decomposition $M=M^+\oplus M^-$ with $M^\pm=e^\pm M$ for any $\ZZ[\frac12][G_\RR]$-module $M$.

\subsubsection{Arithmetic}

Fix an algebraic closure $\overline \QQ $ of $\QQ$. For any non-negative integer $m$, we denote by $\mu_{m}$ the subgroup of all $m$-th roots of unity in $\overline \QQ^\times$. As usual, we denote $\mu_{p^n}$ ($p$ is a prime number) by $\ZZ/p^n(1)$, and $\varprojlim_n \mu_{p^n}$ by $\ZZ_p(1)$. For any integer $j$, $\ZZ_p(j)$ and $\QQ_p(j)$ are defined in the usual way.

For a number field $K$, i.e. a finite extension of $\QQ$ in $\overline \QQ$, we write $S_\infty(K)$, $S_\CC(K)$ and $S_p(K)$ for the set of archimedean,
complex and $p$-adic places of $K$ respectively. We write $S_\infty$ for $S_\infty(K)$ if there is no danger of confusion. The ring of integers of $K$ is denoted by $\co_K$. For a finite set $S$ of places of $K$, the ring of $S$-integers of $K$ is denoted by $\co_{K,S}$. If $L$ is a finite extension of $K$, then the set of places of $K$ which ramify in $L$ is denoted by $S_{\rm ram}(L/K)$ and the set of places of $L$
lying above any given set of places $S$ of $K$ is denoted by $S_L$. The ring of $S_L$-integers of $L$ is denoted by $\co_{L,S}$ instead of $\co_{L,S_L}$.


Let $L/K$ be a finite abelian extension with Galois group $G$. Let $S$ and $T$ be finite disjoint sets of places of $K$ such that $S_\infty(K) \cup S_{\rm ram}(L/K) \subset S$.
Then, for a character $\chi \in \widehat G:=\Hom_\ZZ(G,\CC^\times)$, the $S$-truncated $T$-modified $L$-function is defined by
$$L_{K,S,T}(\chi,s):=\prod_{v\in T}(1-\chi({\rm Fr}_v) {\N}v^{1-s})\prod_{v\notin S}(1- \chi({\rm Fr}_v){\N}v^{-s})^{-1} \ ({\rm Re}(s)>1)$$
where ${\rm Fr}_v \in G$ is the Frobenius automorphism at a place of $L$ above $v$, and ${\N}v$ is the cardinality of the residue field $\kappa(v)$ of $v$. The function
$L_{K,S,T}(\chi,s)$ continues meromorphically to the whole complex plane and its leading term at an integer $j$ is denoted by
$L_{K,S,T}^\ast(\chi,j)$. The $S$-truncated $T$-modified $L$-function for $L/K$ is defined by setting
$$\theta_{L/K,S,T}(s):=\sum_{\chi \in \widehat G}L_{K,S,T}(\chi^{-1},s)e_\chi \,\,\,\text{ with } \,\,\, e_\chi:=\frac{1}{\# G}\sum_{\sigma \in G}\chi(\sigma) \sigma^{-1}$$
and has leading term at $s=j$ equal to $\theta_{L/K,S,T}^\ast(j):=\sum_{\chi \in \widehat G}L_{K,S,T}^\ast(\chi^{-1},j)e_\chi \in \CC[G]^\times$. When $T=\emptyset$, we omit it from notations (so we denote $L_{K,S,\emptyset}(\chi,s)$ by $L_{K,S}(\chi,s)$, for example). Note that
$$\theta_{L/K,S,T}(s)= \delta_{L/K,T}(s)\cdot \theta_{L/K,S}(s) $$
with $\delta_{L/K,T}(s) := \prod_{v \in T}(1-{\N}v^{1-s}{\rm Fr}_v^{-1})$.

\section{Generalized Stark elements} \label{section2}

\subsection{The general set up}\label{setup}

Let $L/K$ be a finite abelian extension of number fields. Set $G:=\Gal(L/K)$. Fix an {\it odd} prime number $p$. For each place $w$ of $L$,
we fix an algebraic closure $\overline L_w$ of $L$ and an embedding $\overline \QQ \hookrightarrow \overline L_w$. From this, we regard $G_{L_w}$ as a subgroup of $G_L$,
and the localization map of Galois cohomology $H^i(L, \cdot)\to H^i(L_w,\cdot)$ is defined by the restriction map. Also, for each
place $w$ in $S_\infty(L)$, we identify $\overline L_w$ with $\CC$.
For each integer $j$ we set
\[ S_{\infty}^j(L) := \begin{cases} S_\infty(L) &\text{if $j$ is even,}\\
                                    S_\CC(L) &\text{if $j$ is odd,}\end{cases} \]
and note that

\begin{equation*}\label{first decomp} Y_L(j):=\bigoplus_{w\in S_\infty(L)} H^0(L_w,\ZZ_p(j))= \bigoplus_{w \in S^j_\infty(L)}\ZZ_p(j).\end{equation*}

In particular, setting $\xi:=(e^{2\pi \sqrt{-1}/p^n})_n \in \ZZ_p(1)$ one obtains a $\ZZ_p$-basis $\{w(j)\}_{w \in S^j_\infty(L)}$ of $Y_L(j)$,
which is defined by $w(j)=(w(j)_{w'})_{w'}$ where
$$w(j)_{w'}:=\begin{cases}
\xi^{\otimes j} &\text{if $w'=w$,}\\
0 &\text{if $w'\neq w.$}
\end{cases}
$$

Next we note that the complex conjugation $c$ in $G_\RR$ acts on the Betti cohomology
$$H_L(j):=H_B^0(\Spec L(\CC),\QQ(j)) =\bigoplus_{\iota: L\hookrightarrow \CC}(2\pi\sqrt{-1})^j \QQ$$
by $c\cdot (a_\iota)_\iota:=(c\cdot a_\iota)_{c\circ \iota}$ for each $a_\iota$ in
$(2\pi\sqrt{-1})^j\QQ$ and we set
$$H_L(j)^+:=e^+ H_L(j) \,\,\,\text{ with } \,\,\, e^+:=\frac{1+c}{2}.$$
Note that the natural decomposition $\CC= \RR \oplus \RR(-1)$ induces an isomorphism
\begin{equation} \label{period map}
 \RR \otimes_{\QQ}L
 \simeq  \RR H_L(j)^+ \oplus \RR H_L(j-1)^+ .
 \end{equation}

For each embedding $\iota': L \hookrightarrow \CC$ we define $\iota'_{j}=(\iota'_{j,\iota})_\iota$ in $H_L(j)$ by setting
$$\iota'_{j,\iota}:=\begin{cases}
(2\pi\sqrt{-1})^j &\text{if $\iota=\iota'$,}\\
0 &\text{if $\iota\neq \iota'.$}
\end{cases}
$$

Then, if for each place $w$ in $S_\infty(L)$ we write $\iota_w: L \to \CC$ for the embedding induced by the fixed embedding
$\overline \QQ \hookrightarrow \overline L_w=\CC$, we obtain an isomorphism of $\QQ_p[G]$-modules
\begin{eqnarray}
 Y_L(j) \otimes_{\ZZ_p}\QQ_p \stackrel{\sim}{\to} H_L(j)^+ \otimes_\QQ \QQ_p \label{beta}
\end{eqnarray}
that sends each element $w(j)$ to $e^+\iota_{w,j}.$

For each place $v$ in $S_\infty(K)$ we now fix a place $w_v$ in $S_\infty(L)$ that lies above $v$ and set $S_\infty(L)/G := \{w_v \mid v \in S_\infty(K) \}$. For any idempotent $\varepsilon$ in $\ZZ_p[G]$ we set
$$W_j^\varepsilon:=\{ w \in S_\infty^j(L)\cap (S_\infty(L)/G) \mid \varepsilon \cdot w(-j) \neq 0 \}$$
and then define $r_j^\varepsilon:=\# W_j^\varepsilon$.

\begin{lemma} \label{lemma idempotent}
If $\varepsilon$ is a primitive idempotent of $\ZZ_p[G]$, then $\varepsilon Y_L(-j)$ is a free $\ZZ_p[G]\varepsilon$-module of rank $r_j^\varepsilon$ with basis $\{ \varepsilon \cdot w(-j) \mid w \in W_j^\varepsilon\}$.
\end{lemma}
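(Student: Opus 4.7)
The strategy is to reduce to a single $G$-orbit and then exploit the fact that $\ZZ_p[G]\varepsilon$ is a local ring.

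First, I would decompose $Y_L(-j)$ as a $\ZZ_p[G]$-module into its $G$-orbital summands. For each $v\in S_\infty(K)$ with $w_v\in S_\infty^j(L)$, write $H_v\subseteq G$ for the decomposition group of $w_v$ and set $M_v:=\bigoplus_{w\in G\cdot w_v}\ZZ_p\cdot w(-j)$, so that $Y_L(-j)=\bigoplus_v M_v$. I would next verify that $w_v(-j)$ is fixed by $H_v$: the only delicate case is $|H_v|=2$ with $w_v$ real, where the non-trivial element acts on the coefficient $\xi^{\otimes -j}$ as complex conjugation; but then $w_v\in S_\infty^{-j}(L)$ forces $j$ to be even, so this action is trivial. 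Since $p$ is odd and $|H_v|\in\{1,2\}$, the element $e_v:=|H_v|^{-1}\sum_{h\in H_v}h$ is an idempotent of $\ZZ_p[G]$, and the assignment $e_v\mapsto w_v(-j)$ extends to a $\ZZ_p[G]$-module isomorphism $\ZZ_p[G]e_v\stackrel{\sim}{\to} M_v$.

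Multiplying by $\varepsilon$ then gives $\varepsilon M_v\cong \ZZ_p[G]\varepsilon\cdot(\varepsilon e_v)$. Since $\varepsilon$ is primitive, $\ZZ_p[G]\varepsilon$ is a local (commutative) ring whose only idempotents are $0$ and $\varepsilon$. Because $\varepsilon e_v$ is idempotent in this ring (it commutes with $\varepsilon$ and squares to itself), it equals either $0$ or $\varepsilon$. The first alternative corresponds to $\varepsilon\cdot w_v(-j)=0$, and the second to $\varepsilon M_v=\ZZ_p[G]\varepsilon$ being free of rank one on the basis element $\varepsilon\cdot w_v(-j)$. By the definition of $W_j^\varepsilon$, the non-vanishing case holds exactly for $w_v\in W_j^\varepsilon$, and summing the resulting isomorphisms over $v$ yields the lemma with rank $r_j^\varepsilon=\#W_j^\varepsilon$.

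The main obstacle, such as it is, lies in the parity bookkeeping of the second step: one must verify that the definition of $S_\infty^{-j}(L)$ excludes exactly those orbits on which complex conjugation would act as $-1$ on $\xi^{\otimes -j}$, so that $w_v(-j)$ is $H_v$-fixed whenever it appears in the sum. Beyond that, the argument is a formal manipulation of induced modules and idempotents in the local ring $\ZZ_p[G]\varepsilon$.
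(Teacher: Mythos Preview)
Your approach is essentially the paper's: decompose $\varepsilon Y_L(-j)$ into cyclic summands $\ZZ_p[G]\varepsilon\cdot w_v(-j)$, observe that each is projective (the paper just says ``since $p$ is odd'' without unpacking this), and use that over the local ring $\ZZ_p[G]\varepsilon$ a cyclic projective module is free of rank $0$ or $1$. Your idempotent argument is a pleasant way to make the last step explicit.

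However, the parity bookkeeping you flag as the main obstacle is genuinely wrong as written. The case $|H_v|=2$ occurs precisely when $v$ is real and $w_v$ is \emph{complex} (if $w_v$ were real then $L_{w_v}=K_v=\RR$ and $H_v=\Gal(\RR/\RR)$ would be trivial). In that actual case the nontrivial element of $H_v$ acts on $\xi^{\otimes(-j)}$ by $(-1)^j$, and nothing forces $j$ to be even: a complex place lies in $S_\infty^{j}(L)$ for every $j$. So when $j$ is odd and $w_v$ is complex over a real $v$, one has $\sigma\cdot w_v(-j)=-w_v(-j)$; the generator is \emph{not} $H_v$-fixed, and $M_v\cong\ZZ_p[G]\,e_v^-$ with $e_v^-=(1-\sigma)/2$ rather than $\ZZ_p[G]\,e_v$.

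This does not break the strategy: either way $M_v\cong\ZZ_p[G]\cdot e$ for \emph{some} idempotent $e\in\ZZ_p[G]$ (namely $e_v^+$ or $e_v^-$ according to the parity of $j$), so $\varepsilon e$ is an idempotent of the local ring $\ZZ_p[G]\varepsilon$, hence $0$ or $\varepsilon$, and your conclusion follows verbatim. But the specific claim ``$w_v(-j)$ is $H_v$-fixed'' is false in general and should be replaced by this parity-dependent identification.
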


\begin{proof}
For each $w \in S_\infty^j(L)$, the $\ZZ_p[G]$-submodule of $Y_L(-j)$ generated by $w(-j)$ is projective, since $p$ is odd. Thus, if $\varepsilon$ is a primitive idempotent  (so that the ring $\ZZ_p[G]\varepsilon$ is local), then $ \ZZ_p[G]\varepsilon \cdot w(-j)$ is either zero or a free $\ZZ_p[G]\varepsilon$-module of rank one and so the decomposition
\[ \varepsilon  Y_L(-j) = \bigoplus_{w\in S_\infty^j(L)\cap S_\infty(L)/G}  \ZZ_p[G] \varepsilon\cdot w(-j)\]
implies that $\varepsilon Y_L(-j)$ is free with basis $\{ \varepsilon \cdot w(-j) \mid w \in W_j^\varepsilon\}$.
\end{proof}

The algebra $\ZZ_p[G]$ is semilocal and so every idempotent of $\ZZ_p[G]$ is
a sum of primitive idempotents. By Lemma \ref{lemma idempotent},
we may consider, without any loss of generality,
an idempotent satisfying the following condition.

\begin{hypothesis}\label{hypo}
$\varepsilon$ is an idempotent of $\ZZ_p[G]$ such that
the $ \ZZ_p[G]\varepsilon$-module $\varepsilon Y_L(-j)$ is free of rank $r_j^\varepsilon$ and has as basis the set  $\{\varepsilon \cdot w(-j) \mid w \in W_j^\varepsilon\}$.\end{hypothesis}





%
%


\begin{example} \label{example} Suppose $K$ is totally real and $L$ is CM and write $c$ for the complex conjugation in $G$.
 For each integer $j$ we obtain idempotents of $\ZZ_p[G]$ by setting $e_j^\pm := (1\pm (-1)^{j}c)/2$ and we abbreviate
 $W_j^{e_j^\pm}$ to $W_j^\pm$. Then Hypothesis \ref{hypo} is satisfied in each of the following cases.
\begin{itemize}
\item[(i)] If $\varepsilon = e_j^+$, then $\varepsilon\cdot w(-j) = w(-j)$ for each $w$ in $S_\infty^j(L)\cap S_\infty(L)/G = S_\infty(L)/G$ and so we have $W_j^+=S_\infty(L)/G$ and $r_j^\varepsilon =
\# S_\infty(L)/G = \# S_\infty(K) = [K:\QQ]$.
\item[(ii)] If $\varepsilon = e_j^-$, then $\varepsilon\cdot  w(-j) = 0$ for each $w$ in $S_\infty^j(L)$ so $W_j^-$ is empty and $r_j^\varepsilon =0$.
\end{itemize}

\end{example}

\subsection{The period-regulator isomorphisms}\label{period-regulator} In this section we assume the idempotent $\varepsilon$ satisfies Hypothesis \ref{hypo} with respect to $j$.

In the sequel we fix a finite set $S$ of places of $K$ which contains $S_\infty(K) \cup S_p(K) \cup S_{\rm ram}(L/K)$. We also fix (and do not explicitly mention) an isomorphism of fields $\CC \simeq \CC_p$.

We write $\widehat G^\varepsilon$ for the subset of $\widehat G$ comprising characters $\chi$ for which
$\varepsilon\cdot e_\chi \not= 0$.
We define a subset  of $\widehat G^\varepsilon$ by setting
\[ \widehat G_j^\varepsilon  := \{ \chi\in \widehat G^\varepsilon \mid {\rm dim}_{\CC_p}(e_\chi \CC_p H^1(\co_{L,S},\QQ_p(1-j))) = r_j^{\varepsilon} \}\]
and then obtain an idempotent of $ \QQ_p[G]\varepsilon$ by setting
\[ \varepsilon_j  := \sum_{\chi \in \widehat G_j^{\varepsilon}}e_\chi .\]




\begin{remark}\label{exp idem} Lemma \ref{prop complex}(ii) below implies that for each
$\chi \in \widehat G^\varepsilon$ one has
\begin{eqnarray*}
 \chi\in \widehat G^\varepsilon_j \Longleftrightarrow
 \begin{cases}
 e_\chi \CC_p H^2(\co_{L,S},\QQ_p(1-j))
                  \,\,\, \text{vanishes}, &\text{if $j \neq 1$},\\
                 e_\chi ( \CC_p \oplus \CC_pH^2(\co_{L,S},\QQ_p)) \,\,\, \text{vanishes}, &\text{if $j=1$}.
\end{cases}
\end{eqnarray*}
By using this description one can deduce the following facts.
\begin{itemize}
\item[(i)] If $j<0$, then $\widehat G^\varepsilon_j = \widehat G^\varepsilon$ (by Soul\'e \cite[Th. 10.3.27]{NSW}) and so $\varepsilon_j = \varepsilon$.
\item[(ii)] If $j=0$, then
\begin{eqnarray*}
\widehat G_0^\varepsilon &=& \{\chi \in \widehat G^\varepsilon \mid e_\chi \CC_p X_{L,S\setminus S_\infty} =0\}\\
&=& \{\chi\in \widehat G^\varepsilon \mid \ord_{s=0}L_{K,S}(\chi,s) = r^\varepsilon_0 \}.
\end{eqnarray*}
Here (and in the sequel), for any finite set $\Sigma$ of places of $K$ we write $X_{L,\Sigma}$ for the kernel of the homomorphism $\bigoplus_{w \in \Sigma_L} \ZZ_p w  \to \ZZ_p$ sending each $w$ to $1$. The first displayed equality then follows by noting that, by class field theory, there is a canonical isomorphism $H^2(\co_{L,S},\QQ_p(1)) \simeq \QQ_p X_{L,S\setminus S_\infty}$, and the second equality follows directly from \cite[Chap. I, Prop. 3.4]{tatebook}.
\item[(iii)] If $j=1$, then Leopoldt's Conjecture for $L$ is equivalent to the vanishing of $H^2(\co_{L,S},\QQ_p)$ and hence implies that
$$
\widehat G^\varepsilon_1  = \{ \chi \in \widehat G^\varepsilon \mid \chi \neq {\bf 1} \},
$$
where we write ${\bf 1}$ for the trivial character of $G$, and so $\varepsilon_1 =\varepsilon (1-e_{\bf 1})$.
\item[(iv)] If $j>1$, then Schneider's Conjecture \cite{ps} for $L$ is equivalent to the vanishing of $H^2(\co_{L,S},\QQ_p(1-j))$ and hence implies $\widehat G^\varepsilon_j = \widehat G^\varepsilon$ and so $\varepsilon_j = \varepsilon$.
\end{itemize}
\end{remark}
%
%

In the remainder of this section we define for each integer $j$ a canonical isomorphism of $\CC_p[G]$-modules
\[ \lambda_{j} : \varepsilon_j\CC_p {\bigwedge}_{\ZZ_p[G]}^{r^\varepsilon_j} H^1(\co_{L,S},\ZZ_p(1-j))
\stackrel{\sim}{\to} \varepsilon_j
\CC_p {\bigwedge}_{\ZZ_p[G]}^{r^\varepsilon_j} Y_{L}(-j).\]
%
%
The explicit definition that we give is motivated by the Tamagawa number conjecture of Bloch and Kato (see, in particular,  the proof of Corollary \ref{etnc proof} below).

\subsubsection{The case $j<0$} In this case the known validity of the Quillen-Lichtenbaum Conjecture (which follows from the recent proof by
Rost and Voevodsky of the Bloch-Kato Conjecture) gives a canonical Chern character isomorphism
\[ {\rm ch}_j: K_{1-2j}(\co_{L})\otimes_\ZZ\ZZ_p \stackrel{\sim}{\to} H^1(\co_{L,S},\ZZ_p(1-j))\]
where we write $K_*(-)$ for Quillen's higher algebraic $K$-theory functor.

One also has $\varepsilon_j = \varepsilon$ (by Remark \ref{exp idem}(i)) and
we define $\lambda_{j}$ to be the $r^\varepsilon_j$-th exterior power of the composite isomorphism of $\CC_p[G]$-modules
\begin{equation*}\label{regulator1} \varepsilon\CC_p H^1(\co_{L,S},\ZZ_p(1-j)) \stackrel{\sim}{\to}  \varepsilon\CC_p  K_{1-2j}(\co_L) \stackrel{\sim}{\to} \varepsilon\CC_p  H_L(-j)^+ \stackrel{\sim}{\to}
\varepsilon\CC_p
 Y_{L}(-j),\end{equation*}
where the first map is induced by the inverse of the isomorphism ${\rm ch}_j^{-1}$, the second by $(-1)$-times the Borel regulator map
\[ b_{j}: \RR K_{1-2j}(\co_L) \stackrel{\sim}{\to} \RR H_L(-j)^+\]
and the third by the isomorphism in (\ref{beta}).

\subsubsection{The case $j = 0$} We note that $ H^1(\co_{L,S},\ZZ_p(1))$ is identified with $\ZZ_p \co_{L,S}^\times$ via Kummer theory and we define
$\lambda_{0}$ to be the $r^\varepsilon_0$-th exterior power of the composite isomorphism of $\CC_p[G]$-modules
\begin{equation}
\varepsilon_0 \CC_p  H^1(\co_{L,S},\ZZ_p(1)) =  \varepsilon_0 \CC_p \co_{L,S}^\times \stackrel{\sim}{\to} \varepsilon_0 \CC_pX_{L,S} \stackrel{\sim}{\to} \varepsilon_0\CC_p Y_{L}(0)
 \label{log}\end{equation}
where the first map is the restriction of the Dirichlet regulator (sending each $a$ in $\co_{L,S,T}^\times$ to $- \sum_{w \in S_L} \log |a|_w w$)
and the second isomorphism follows from the vanishing of $\varepsilon_0 \CC_p X_{L,S\setminus S_\infty}$ (see Remark \ref{exp idem}(ii)).



\subsubsection{The case $j = 1$} \label{subsection j=1}
We write $\Gamma_{L,S}$ for the Galois group of the maximal abelian pro-$p$ extension of $L$ unramified outside $S$. Then the module $H^1(\co_{L,S},\QQ_p)$ identifies with $\Hom_{\rm cont}(\Gamma_{L,S},\QQ_p)$ and so, by combining the
global class field theory with Remark \ref{exp idem}(iii), one obtains a canonical short exact sequence of $\CC_p[G]$-modules
\begin{equation}\label{j=1 iso}
\xymatrix{ \varepsilon_1 \CC_p H^1(\co_{L,S},\QQ_p) \ar@{^{(}->}[r]  & \varepsilon_1{\bigoplus}_{w\in S_p(L)} (\CC_p \mathcal{O}_{L_w}^\times)^\ast  \ar[d]^{\simeq}_{{\rm exp}^\ast_p} \ar@{->>}[r] & \varepsilon_1(\CC_p\mathcal{O}_L^\times)^\ast \\
 & \varepsilon_1(\CC_p\otimes_{\QQ}L)^\ast  & \varepsilon_1(\CC_p H_L(0)^+)^\ast .\ar[u]_{\simeq}
 }
\end{equation}
Here the first vertical isomorphism is induced by the linear dual of the $p$-adic exponential map homomorphisms $L_w\to \QQ_p\mathcal{O}_{L_w}^\times$ for $w$ in $S_p(L)$ and the second by the linear dual of the isomorphism $\CC_p\mathcal{O}_L^\times \simeq \CC_p X_{L,S_\infty}$ induced by the Dirichlet regulator map and the fact that
$\varepsilon_1 \QQ_p X_{L,S_\infty}$ is equal to $\varepsilon_1\QQ_p Y_{L}(0)$ and hence isomorphic to $\varepsilon_1 \QQ_p H_L(0)^+$ by (\ref{beta}).

Abbreviating ${\rm det}_{\CC_p[G]}(-)$ to ${\rm D}(-)$, we then define $\lambda_{1}$ to be the composite isomorphism of $\CC_p[G]$-modules
\begin{multline*} \varepsilon_1 \CC_p{\bigwedge}_{\ZZ_p[G]}^{r^\varepsilon_1} H^1(\co_{L,S},\ZZ_p) = \varepsilon_1{\rm D}(\CC_p H^1(\co_{L,S},\ZZ_p))\\ \simeq \varepsilon_1\left({\rm D}((\CC_p\otimes_{\QQ}L)^\ast)\otimes_{\CC_p[G]}{\rm D}^{-1}((\CC_p H_L(0)^+)^\ast)\right)\\
 \simeq \varepsilon_1 {\rm D}((\CC_p H_L(1)^+)^\ast) \simeq \varepsilon_1{\rm D}(\CC_pY_L(1)^\ast) \simeq \varepsilon_1 \CC_p{\bigwedge}_{\ZZ_p[G]}^{r^\varepsilon_1}Y_L(-1).\end{multline*}
Here the first isomorphism is the canonical isomorphism induced by (\ref{j=1 iso}), the second is induced by the linear dual of (\ref{period map}) (with $j =1$), the third by (\ref{beta}) and the last by the canonical identification $Y_L(1)^\ast \simeq Y_L(-1)$.

\subsubsection{The case $j>1$} 
In this case the vanishing of $\varepsilon_j H^2(\co_{L,S},\QQ_p(1-j))$ (see Remark \ref{exp idem}) combines with the
 local and global duality theorems to give a canonical short exact sequence of $\CC_p[G]$-modules

{\small
\begin{equation*}
\xymatrix{ \varepsilon_j \CC_p H^1(\co_{L,S},\ZZ_p(1-j)) \ar@{^{(}->}[r]  &  \ar[d]^{\simeq}_{{\rm syn}_{p}}\varepsilon_j \bigoplus_{w\in S_p(L)} \CC_pH^1(L_w,\ZZ_p(j))^\ast \ar@{->>}[r]
& \varepsilon_j \CC_p H^1(\co_{L,S},\ZZ_p(j))^\ast \\
 & \varepsilon_j(\CC_p\otimes_{\QQ}L)^\ast  & \varepsilon_j\CC_p H_L(j-1)^{+,\ast} \ar[u]_{\simeq} }
\end{equation*}
}
in which the second vertical homomorphism is induced by the dual of $-b_{1-j} \circ {\rm ch}_{1-j}^{-1}$ and the
first is induced by the linear duals for each $w$ in $S_p(L)$ of the canonical composite homomorphisms $L_w \to H^1_{\rm syn}(\mathcal{O}_{L_w},j) \to H^1(L_w,\QQ_p(j))$ involving syntomic cohomology that are discussed by Besser in \cite[(5.3) and Cor. 9.10]{besser}.

We then define $\lambda_{j}$ to be the isomorphism of $\CC_p[G]$-modules
%
%
obtained from the above diagram in just the same way that $\lambda_{1}$ is obtained from
(\ref{j=1 iso}).

\begin{remark}\label{besser rem} In \cite[Prop. 9.11]{besser} Besser proves that for $w$ in $S_p(L)$ the composite homomorphism $L_w \to H^1_{\rm syn}(\mathcal{O}_{L_w},j) \to H^1(L_w,\QQ_p(j))$ used above coincides with the exponential map of Bloch and Kato for $\QQ_p(j)$ over $L_w$. In this way the definition of $\lambda_{j}$ for $j > 1$ is naturally analogous to the definition of
$\lambda_{1}$. \end{remark}

\begin{remark} \label{remark schneider}
  A closer analysis of the discussions used to define $\lambda_{j}$ for $j>0$ shows that, in this case, if $\varepsilon Y_L(1-j)$ vanishes, then $\varepsilon_j=\varepsilon$. 
\end{remark}

\subsection{The definition of generalized Stark elements}



\begin{definition}\label{gen stark def} Fix an integer $j$, an idempotent $\varepsilon$ of $\ZZ_p[G]$ that satisfies Hypothesis \ref{hypo} (with respect to $j$). Fix also finite sets $S$ and $T$ of places of $K$ satisfying
\begin{itemize}
\item $S_\infty(K)\cup S_p(K)\cup S_{\rm ram}(L/K) \subset S$;
\item $S\cap T=\emptyset$;
\item $T=\emptyset$ if $j=1$.
\end{itemize}
Then the `Stark element of rank $r^\varepsilon_j$ and weight $-2j$' for $(L/K,S,T,\varepsilon)$ is the unique element $\eta_{L/K,S,T}^{\varepsilon}(j)$ of
$\varepsilon_j\CC_p {\bigwedge}_{\ZZ_p[G]}^{r^\varepsilon_j} H^1(\co_{L,S},\ZZ_p(1-j))$ that satisfies
 \[ \lambda_{j}(\eta_{L/K,S,T}^{\varepsilon}(j)) = \varepsilon_j\theta_{L/K,S,T}^{\ast}(j)\cdot
 {\bigwedge}_{w \in W_j^{\varepsilon}}w(-j) \ \ \text{ in }\ \ \varepsilon_j\CC_p{\bigwedge}_{\ZZ_p[G]}^{r^\varepsilon_j}Y_L(-j). \]
\end{definition}

\begin{remark}
It is natural to regard $\eta_{L/K,S,T}^\varepsilon(j)$ to be of weight $-2j$ since it is associated to the motive $h^0(\Spec L)(j)$.
\end{remark}

\begin{example}\label{example2} Definition \ref{gen stark def} generalizes the classical notion of Rubin-Stark element introduced by Rubin in \cite{rubinstark}. In fact, we have
$$\varepsilon_0\theta_{L/K,S,T}^{\ast}(0) =
\varepsilon \cdot \lim_{s \to 0 }s^{-r_0^\varepsilon}\theta_{L/K,S,T}(s)$$
by Remark \ref{exp idem}(ii) and \cite[Chap. I, Prop. 3.4]{tatebook} and so $\eta_{L/K,S,T}^{\varepsilon}(0)$ coincides with (the `$\varepsilon$-component' of) the Rubin-Stark element for the data
 $(L/K,S,T,W_0^\varepsilon)$.
\end{example}

The following proposition is a natural analogue of \cite[Prop. 6.1]{rubinstark}.

\begin{proposition} \label{property stark}
Suppose that  $(L'/K,S',T',\varepsilon')$ is another collection of data as in Definition \ref{gen stark def} (with respect to $j$) for which all of the following properties are satisfied:  $L \subset L'$, $S\subset S'$, $T \subset T'$, with $G':=\Gal(L'/K)$ the natural surjection $\ZZ_p[G'] \to \ZZ_p[G]$ sends $\varepsilon'$ to $\varepsilon$ and $W_j^{\varepsilon}$ is the set of places of $L$ obtained by restricting places in $W_j^{\varepsilon'}$.

Then $r_j^{\varepsilon'}=r_j^\varepsilon=:r$ and the map
$$\varepsilon_j' \CC_p {\bigwedge}_{\ZZ_p[G']}^{r} H^1(\co_{L',S'},\ZZ_p(1-j)) \to \varepsilon_j\CC_p {\bigwedge}_{\ZZ_p[G]}^{r} H^1(\co_{L,S'},\ZZ_p(1-j))$$
induced by the corestriction map ${\rm Cor}_{L'/L}: H^1(\co_{L',S'}, \ZZ_p(1-j)) \to H^1(\co_{L,S'},\ZZ_p(1-j))$ sends $\eta_{L'/K,S',T'}^{\varepsilon'}(j)$  to
$$ \delta_{L/K,T'\setminus T}(j) \cdot \left( \prod_{v \in S' \setminus S}(1-{\N}v^{-j}{\rm Fr}_v^{-1}) \right) \cdot \eta_{L/K,S,T}^\varepsilon(j).$$
\end{proposition}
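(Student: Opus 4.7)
The plan is to verify the claim by applying the characterizing property of Stark elements: one computes the image of both sides under $\lambda_j^L$ and compares the results in $\varepsilon_j\CC_p\bigwedge^{r}_{\ZZ_p[G]} Y_L(-j)$. This reduces the proposition to two independent assertions, which I would establish separately: a functoriality statement for the period-regulator isomorphism under corestriction from $L'$ to $L$, and an $L$-function identity between the leading terms at $s=j$.

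For the $L$-function identity I would use that every character $\chi$ of $G$ inflates to a character of $G'$ with the same $S'$-truncated, $T'$-modified $L$-value, together with the definitions of $\delta_{L/K,T}(s)$ and the local Euler factors, to deduce
\[ \pi\!\left(\theta^{\ast}_{L'/K,S',T'}(j)\right) = \delta_{L/K,T'\setminus T}(j)\cdot\prod_{v\in S'\setminus S}\!\left(1-\N v^{-j}\mathrm{Fr}_v^{-1}\right)\cdot \theta^{\ast}_{L/K,S,T}(j), \]
where $\pi\colon \CC_p[G']\to \CC_p[G]$ is the natural projection. Passage from $\theta$ to $\theta^{\ast}$ on the $\varepsilon_j$-component is unproblematic, since the extra $\delta$- and Euler-factor terms are regular at $s=j$ on this component and therefore do not affect the common order of vanishing $r=r_j^{\varepsilon'}=r_j^{\varepsilon}$.

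The main technical content lies in the functoriality statement, namely in the commutativity of the diagram
\[ \xymatrix{ \varepsilon_j'\CC_p{\bigwedge}^{r}_{\ZZ_p[G']} H^1(\co_{L',S'},\ZZ_p(1-j)) \ar[r]^-{\lambda_j^{L'}}\ar[d]_{\mathrm{Cor}_{L'/L}}& \varepsilon_j'\CC_p{\bigwedge}^{r}_{\ZZ_p[G']} Y_{L'}(-j) \ar[d] \\ \varepsilon_j\CC_p{\bigwedge}^{r}_{\ZZ_p[G]} H^1(\co_{L,S'},\ZZ_p(1-j)) \ar[r]^-{\lambda_j^{L}}& \varepsilon_j\CC_p{\bigwedge}^{r}_{\ZZ_p[G]} Y_L(-j) } \]
in which the right vertical arrow is induced by the natural surjection $Y_{L'}(-j)\to Y_L(-j)$ sending $w'(-j)$ to $(w'|_L)(-j)$; under the hypothesis relating $W_j^{\varepsilon'}$ and $W_j^{\varepsilon}$, this arrow carries $\bigwedge_{w'\in W_j^{\varepsilon'}}w'(-j)$ to $\bigwedge_{w\in W_j^{\varepsilon}}w(-j)$. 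Commutativity must be verified in each of the four cases appearing in \S\ref{period-regulator}: for $j<0$ and $j=0$ it reduces to the compatibility of the Chern character, Borel regulator and Dirichlet regulator with corestriction and trace; for $j=1$ and $j>1$ one must additionally invoke the functoriality of the $p$-adic exponential, local duality, and Besser's syntomic-to-\'etale comparison under the base change $L_w\to L'_{w'}$. Combining the resulting diagram with the $L$-function identity and the uniqueness clause of Definition \ref{gen stark def} yields the proposition. The principal obstacle is the assembly of the diagram for $j\geq 1$, where $\lambda_j$ is built out of several interlocking dual sequences and one must carefully track signs and normalisations when passing between global cohomology, local cohomology and the (syntomic or $p$-adic) exponential; for $j\leq 0$ all the relevant functorialities are direct generalisations of Rubin's argument in \cite[Prop.~6.1]{rubinstark}.
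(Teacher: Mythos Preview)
Your proposal is correct and follows the same route as the paper: reduce to the defining property of the Stark element, establish the $L$-function identity for the leading terms under $\pi\colon\CC_p[G']\to\CC_p[G]$, and use compatibility of $\lambda_j$ with corestriction. The paper's proof is extremely terse---it records only the $L$-function identity and declares the rest ``follows easily''---so what you have written is essentially an unpacking of that one sentence; in particular, your commutative square for $\lambda_j$ is exactly the functoriality being silently invoked. One small inaccuracy: you refer to $r=r_j^{\varepsilon'}=r_j^\varepsilon$ as a ``common order of vanishing'', but in this paper $r_j^\varepsilon$ is defined combinatorially as $\#W_j^\varepsilon$ (and only coincides with an order of vanishing when $j=0$, cf.\ Remark~\ref{exp idem}(ii)); the equality $r_j^{\varepsilon'}=r_j^\varepsilon$ follows immediately from the hypothesis that restriction gives a bijection $W_j^{\varepsilon'}\to W_j^\varepsilon$.
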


\begin{proof} This follows easily from the fact that the natural surjection $\CC[G'] \to \CC[G]$ sends $\varepsilon_j' \theta_{L'/K,S',T'}^\ast(j)$ to $$\varepsilon_j \theta_{L/K,S',T'}^\ast(j)=\varepsilon_j \delta_{L/K, T'\setminus T}(j) \cdot \left( \prod_{v \in S' \setminus S}(1-{\N}v^{-j}{\rm Fr}_v^{-1}) \right) \cdot \theta_{L/K,S,T}^\ast(j).$$
\end{proof}

\section{Statement of the conjectures}\label{statement of conjs}

\subsection{A Rubin-Stark Conjecture in arbitrary weight}

\subsubsection{Exterior power biduals and pairings} Fix a commutative ring $R$ and a finitely generated $R$-module $M$. In the following, we abbreviate $\bigwedge_R^r (M^\ast)$ to $\bigwedge_R^r M^\ast$.

For non-negative integers $r$ and $s$ with $r\le s$ there is a canonical pairing
$${\bigwedge}_R^s M \times {\bigwedge}_R^r M^\ast \to {\bigwedge}_R^{s-r} M$$
defined by
$$(a_1\wedge\cdots\wedge a_s,\varphi_1\wedge\cdots\wedge \varphi_r)\mapsto
\sum_{\sigma \in {\mathfrak{S}_{s,r}} }{\rm{sgn}}(\sigma)\det(\varphi_i(a_{\sigma(j)}))_{1\leq i,j\leq r} a_{\sigma(r+1)}\wedge\cdots\wedge a_{\sigma(s)} ,$$
with $\mathfrak{S}_{s,r}:= \{ \sigma \in \mathfrak{S}_s  \mid \sigma(1) < \cdots < \sigma(r) \text{ and } \sigma(r+1) <\cdots<\sigma(s) \}.$ We denote the image of $(a,\Phi)$ under the above pairing by $\Phi(a)$.

We also use the following construction (compare \cite[\S 1.2]{rubinstark}).

\begin{definition} For each non-negative integer $r$ the $r$-th exterior bidual of $M$ is the module
$${\bigcap}_R^r M:=\left({\bigwedge}_R^r M^\ast \right)^\ast.$$
\end{definition}


\begin{remark} If $R=\ZZ_p[G]$ with a finite abelian group $G$, then the map $a \mapsto (\Phi \mapsto \Phi(a))$ induces an identification
\[ \left\{ a \in \QQ_p{\bigwedge}_{\ZZ_p[G]}^r M \ \middle| \ \Phi(a)\in \ZZ_p[G] \text{ for every $\Phi \in {\bigwedge}_{\ZZ_p[G]}^r M^\ast$} \right\} \simeq {\bigcap}_{\ZZ_p[G]}^r M\]
and so we may regard ${\bigcap}_{\ZZ_p[G]}^r M$ as a subset of $\QQ_p{\bigwedge}_{\ZZ_p[G]}^r M$. \end{remark}

\begin{lemma} \label{lemma phi}
Suppose that $R=\ZZ_p[G]$ with a finite abelian group $G$ and that $M$ is $\ZZ_p$-free. Let $H$ be a subgroup of $G$ and denote the natural surjection $\QQ_p[G] \to \QQ_p[G/H]$ by $\pi_H$. Then, for any $a \in \QQ_p \bigwedge_{\ZZ_p[G]}^r M$, we have
$$\pi_H\left( \left\{ \Phi(a) \ \middle| \ \Phi \in {\bigwedge}_{\ZZ_p[G]}^r M^\ast \right\}\right)=\left\{ \Psi ({\N}_H^r (a)) \ \middle| \ \Psi \in {\bigwedge}_{\ZZ_p[G/H]}^r (M^H)^\ast \right\},$$
where
$${\N}_H^r : \QQ_p {\bigwedge}_{\ZZ_p[G]}^r M \to \QQ_p {\bigwedge}_{\ZZ_p[G/H]}^r M^H$$
is the map induced by the norm map
$$M \to M^H; \ m \mapsto \sum_{\sigma \in H} \sigma \cdot m.$$
\end{lemma}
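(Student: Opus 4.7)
The plan is to exhibit a natural $\ZZ_p[G]$-linear map $\alpha \colon M^\ast \to (M^H)^\ast$ that intertwines the two evaluation-at-$a$ maps, and to prove that $\alpha$ (and hence its exterior powers) is surjective; the asserted equality will follow immediately.

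Concretely, I would define $\alpha(\varphi) := \frac{1}{|H|}\, \pi_H \circ \varphi|_{M^H}$. This is well defined since $\varphi(M^H) \subseteq \ZZ_p[G]^H = \N_H\cdot \ZZ_p[G]$ and $\pi_H$ restricted to this submodule is $|H|$ times the canonical isomorphism $\ZZ_p[G]^H \cong \ZZ_p[G/H]$, $\N_H g \mapsto \bar g$. Because the $\ZZ_p[G]$-action on $(M^H)^\ast$ factors through $\pi_H$, taking $r$-th exterior powers produces
\[ \alpha^r \colon {\bigwedge}_{\ZZ_p[G]}^r M^\ast \longrightarrow {\bigwedge}_{\ZZ_p[G/H]}^r (M^H)^\ast. \]
By $\ZZ_p$-bilinearity it is enough to verify the identity
\[ \pi_H(\Phi(a)) = \alpha^r(\Phi)(\N_H^r(a)) \]
on decomposable $\Phi = \varphi_1 \wedge \cdots \wedge \varphi_r$ and $a = a_1 \wedge \cdots \wedge a_r$, where both sides reduce to $\det(\pi_H(\varphi_i(a_j)))$ using the $G$-equivariance $\varphi_i \circ \N_H = \N_H \circ \varphi_i$ and the identity $\pi_H \circ \N_H = |H|\cdot \pi_H$ on $\ZZ_p[G]$. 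Taking $\Psi := \alpha^r(\Phi)$ then gives the inclusion ``$\subseteq$''.

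For the reverse inclusion it suffices to prove that $\alpha^r$ is surjective, and by right-exactness of exterior powers over the commutative ring $\ZZ_p[G]$ this reduces to the surjectivity of $\alpha$. For this, I would invoke the Frobenius property of $\ZZ_p[G]$: the bimodule isomorphism $\ZZ_p[G] \cong \Hom_{\ZZ_p}(\ZZ_p[G], \ZZ_p)$, combined with tensor-hom adjunction, yields canonical $\ZZ_p$-linear identifications $M^\ast \cong M^\vee$ and $(M^H)^\ast \cong (M^H)^\vee$, where $M^\vee := \Hom_{\ZZ_p}(M, \ZZ_p)$. A short computation shows that under these identifications $\alpha$ becomes the restriction map $M^\vee \to (M^H)^\vee$, i.e.\ the $\ZZ_p$-dual of the inclusion $M^H \hookrightarrow M$. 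Since $M^H = \bigcap_{\sigma \in H}\ker(\sigma-1)$ is a saturated $\ZZ_p$-submodule of the $\ZZ_p$-free module $M$, the quotient $M/M^H$ is itself $\ZZ_p$-free, so the inclusion splits as a map of $\ZZ_p$-modules and its $\ZZ_p$-dual is surjective.

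The main obstacle is the surjectivity of $\alpha$: once it is in place, everything else is formal. Its most delicate point is the careful bookkeeping required to identify $\alpha$ with the $\ZZ_p$-dual of $M^H \hookrightarrow M$ via the Frobenius self-duality of $\ZZ_p[G]$; alternatively, one can bypass this identification by observing that the long exact $\Ext$-sequence attached to $0 \to M^H \to M \to M/M^H \to 0$ reduces the surjectivity to the vanishing of $\Ext^1_{\ZZ_p[G]}(M/M^H, \ZZ_p[G])$, which holds because $\ZZ_p[G]$ is Gorenstein and $M/M^H$ is $\ZZ_p$-free (hence maximal Cohen--Macaulay over $\ZZ_p[G]$).
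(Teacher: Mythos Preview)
Your argument is correct. The paper itself does not give a proof but simply cites \cite[Rem.~2.9 and Lem.~2.10]{sano}, so you have supplied a self-contained argument where the paper defers to an external reference.

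A couple of minor points worth tightening. First, the expression $\frac{1}{|H|}\,\pi_H\circ\varphi|_{M^H}$ is a slight abuse when $p\mid |H|$; what you really mean (and essentially say) is that $\alpha(\varphi)$ is the composite of $\varphi|_{M^H}\colon M^H\to \ZZ_p[G]^H$ with the canonical isomorphism $\ZZ_p[G]^H\xrightarrow{\sim}\ZZ_p[G/H]$, $\N_H g\mapsto \bar g$. Second, when you pass to $\alpha^r$, note that the target exterior power is over $\ZZ_p[G/H]$ while the source is over $\ZZ_p[G]$; strictly speaking you first take $\bigwedge^r_{\ZZ_p[G]}\alpha$ and then compose with the natural surjection $\bigwedge^r_{\ZZ_p[G]}(M^H)^\ast\twoheadrightarrow \bigwedge^r_{\ZZ_p[G/H]}(M^H)^\ast$. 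This does not affect the surjectivity argument, since both maps are surjective on decomposable tensors.

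Your two routes to the surjectivity of $\alpha$ (Frobenius self-duality reducing to the split inclusion $M^H\hookrightarrow M$ of $\ZZ_p$-modules, or the vanishing of $\Ext^1_{\ZZ_p[G]}(M/M^H,\ZZ_p[G])$ via the Gorenstein property) are both valid; the first is more elementary and closer in spirit to what one finds in \cite{sano}, while the second places the result in a cleaner homological framework.
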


\begin{proof}
This follows from \cite[Rem. 2.9 and Lem. 2.10]{sano}.
\end{proof}

\subsection{$T$-modified cohomology} \label{ST} Let $j$ be an integer, and $S$ and $T$ sets of places of $K$ as in Definition \ref{gen stark def}.

Let now $R$ denote any of the rings $\ZZ_p$, $\QQ_p$ and $\ZZ/p^n$ for some
natural number $n$. Then, as $T$ is disjoint from $S$,
for each $w$ in $T_L$
there is a natural momorphism of \'{e}tale cohomology complexes
$R\Gamma(\co_{L,S},R(1-j)) \to R\Gamma(\kappa(w),R(1-j))$.

We define
 $R\Gamma_{T}(\co_{L,S},R(1-j))$ to be a complex that lies in an exact triangle in the derived category $D(R[G])$ of complexes of $R[G]$-modules of the form
\begin{equation}\label{T-mod def} R\Gamma_{T}(\co_{L,S},R(1-j)) \to R\Gamma(\co_{L,S},R(1-j)) \to
\bigoplus_{w\in T_L}R\Gamma(\kappa(w),R(1-j)) \to , \end{equation}
where the second arrow is the diagonal map induced by the morphisms described above. In each degree $i$ we then set
\[ H^i_T(\co_{L,S},R(1-j)) := H^i(R\Gamma_{T}(\co_{L,S},R(1-j)))\]
and we note that $H_T^i(\co_{L,S},\QQ_p(1-j))=H^i(\co_{L,S},\QQ_p(1-j))$ (this follows from the fact that $R\Gamma(\kappa(w),\QQ_p(1-j))$ is acyclic if $j\neq 1$ and the assumption that $T=\emptyset$ if $j=1$). In particular, we can regard $\bigcap_{\ZZ_p[G]}^r H^1_T(\co_{L,S},\ZZ_p(1-j))$ as a lattice of $\QQ_p \bigwedge_{\ZZ_p[G]}^r H^1(\co_{L,S},\ZZ_p(1-j))$.

\begin{example} \label{ex ST}
Kummer theory identifies
$H_T^1(\co_{L,S},\ZZ_p(1))$ with the $p$-completion of the $(S,T)$-unit group $\co_{L,S,T}^\times:=\ker(\co_{L,S}^\times \to \bigoplus_{w\in T_L}\kappa(w)^\times)$
of $L$.
\end{example}


%

\subsubsection{Statement of the conjecture}
 In the sequel for each non-negative integer $i$ we write ${\rm Fitt}_{\ZZ_p[G]}^i(M)$ for the $i$-th Fitting ideal of a $\ZZ_p[G]$-module $M$. We also write $I_G$ for the augmentation ideal of $\ZZ_p[G]$.

\begin{conjecture}\label{higherfitt} Fix an integer $j$, an idempotent $\varepsilon$ of $\ZZ_p[G]$ satisfying Hypothesis \ref{hypo} (with respect to $j$) and sets of places $S$ and $T$ as in Definition \ref{gen stark def}. Assume $\varepsilon H_T^1(\co_{L,S},\ZZ_p(1-j))$ is $\ZZ_p$-free and in addition that $\varepsilon \in I_G$ if $j = 1$.

Then, with $\eta = \eta_{L/K,S,T}^\varepsilon(j)$, one has
\begin{equation}\label{bks eq}
\left\{\!\Phi(\eta) \middle| \Phi\! \in\!
{\bigwedge}_{\ZZ_p[G]}^{r^\varepsilon_j}H^1_T(\co_{L,S},\ZZ_p(1-j))^\ast\!\right\}\!=\!\varepsilon\cdot{\rm Fitt}_{\ZZ_p[G]}^{0}(H^2_{T}(\co_{L,S},\ZZ_p(1-j)))
\end{equation}
and hence also
\begin{equation}\label{rs inclusion} \eta \in {\bigcap}_{\ZZ_p[G]}^{r^\varepsilon_j} H^1_T(\co_{L,S},\ZZ_p(1-j)).\end{equation}
\end{conjecture}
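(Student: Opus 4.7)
The plan is to interpret $\eta = \eta^{\varepsilon}_{L/K,S,T}(j)$ as the image under the regulator isomorphism $\lambda_j$ of a canonical basis of the graded determinant module associated to the $\varepsilon$-part of the $T$-modified \'etale cohomology, and then to translate the resulting determinantal equality into the claimed statement about Fitting ideals via a purely algebraic pairing lemma. This is the same philosophy that underlies the interpretation of \cite[Conj.~7.3]{bks1} in terms of arithmetic zeta elements, but now carried out at an arbitrary integer point $j$ using the regulator constructions of \S\ref{period-regulator}.

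The first step is to represent the relevant cohomology by a perfect complex. Under Hypothesis \ref{hypo} and the freeness of $\varepsilon H^1_T(\co_{L,S},\ZZ_p(1-j))$, the $\varepsilon_j$-part of $R\Gamma_T(\co_{L,S},\ZZ_p(1-j))$ is represented by a perfect two-term complex $C^\bullet_{\varepsilon_j}$ of $\ZZ_p[G]\varepsilon_j$-modules placed in degrees $1$ and $2$, with $H^1$ free of rank $r^\varepsilon_j$ over $\ZZ_p[G]\varepsilon_j$ and $H^2$ a torsion $\ZZ_p[G]\varepsilon_j$-module. The combination of $\lambda_j$ (defined via the Bloch-Kato exponential, as recorded in Remark \ref{besser rem}) with the leading term $\theta^\ast_{L/K,S,T}(j)$ specifies a $\CC_p$-valued trivialization of $\det_{\ZZ_p[G]\varepsilon_j}(C^\bullet_{\varepsilon_j})$, and the equivariant Tamagawa number conjecture for the pair $(h^0(\Spec L)(j),\ZZ_p[G])$, localized at $\varepsilon_j$, predicts exactly that this trivialization is integral, i.e.\ that it generates $\det_{\ZZ_p[G]\varepsilon_j}(C^\bullet_{\varepsilon_j})$ as a free rank-one $\ZZ_p[G]\varepsilon_j$-module. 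Unwinding Definition \ref{gen stark def} then identifies $\eta$ with the image of this integral generator in $\varepsilon_j\QQ_p\bigwedge^{r^\varepsilon_j}_{\ZZ_p[G]} H^1_T(\co_{L,S},\ZZ_p(1-j))$ under the natural map from the determinant line induced by the exact triangle \eqref{T-mod def}.

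The second step is purely algebraic. For any perfect complex $C$ of $\ZZ_p[G]$-modules concentrated in degrees $1$ and $2$ with $H^1(C)$ free of rank $r$ and $H^2(C)$ a torsion module of finite projective dimension, the set $\{\Phi(\zeta) : \Phi \in \bigwedge^{r}_{\ZZ_p[G]} (H^1(C))^\ast\}$, computed for any generator $\zeta$ of $\det_{\ZZ_p[G]}(C)$, coincides with $\Fitt^0_{\ZZ_p[G]}(H^2(C))$. One verifies this by choosing a presentation of $C$ by free modules and expanding $\Phi(\zeta)$ along maximal minors of the representing matrix of the differential. This assertion is a direct extension of the arguments of \cite[\S 5]{bks1} to the present higher-rank setting; combined with the first step it yields \eqref{bks eq}, whereupon the inclusion \eqref{rs inclusion} is immediate from the remark following Lemma \ref{lemma phi}. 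The auxiliary hypothesis $\varepsilon \in I_G$ when $j=1$ serves only to discard the trivial character, so that $\varepsilon_1=\varepsilon$ on the components relevant to $\eta$ (cf.\ Remark \ref{exp idem}(iii)).

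The main obstacle, as will be clear from the outline above, is the equivariant Tamagawa number conjecture itself, which is open in this generality. I therefore expect this plan to yield an unconditional proof only in those settings in which the ETNC, or a suitably descended form of it, is already known: most importantly for absolutely abelian $L$ via the combined work of Burns-Greither, Flach and Huber-Kings, and for the minus part of CM extensions of totally real fields via the equivariant main conjecture of Ritter-Weiss and Dasgupta-Kakde. Moving beyond these cases would require either genuinely new input into the ETNC, or else a reduction, via the conjectural congruences of Conjecture \ref{congruence conjecture}, of the arbitrary-weight statement to the known weight-zero case \cite[Conj.~7.3]{bks1}.
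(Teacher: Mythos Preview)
Your strategy is exactly the one the paper pursues: Theorem~\ref{thm1} makes precise the reduction to the existence of an integral $\ZZ_p[G]\varepsilon$-basis of $\varepsilon\det_{\ZZ_p[G]}(C_{L,S,T}(j))$ whose $\varepsilon_j$-component is the zeta element, and Corollary~\ref{etnc proof} then deduces the unconditional cases of Theorem~\ref{evidence} from known instances of the ETNC (and, for the CM minus part, from Wiles' main conjecture together with a $\mu=0$ hypothesis, rather than Ritter--Weiss or Dasgupta--Kakde as you suggest). You have also correctly identified that the argument is conditional on ETNC in general, so this is not a proof of the conjecture but of the implication ETNC $\Rightarrow$ Conjecture~\ref{higherfitt}.

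A few technical points in your outline need tightening, however. The idempotent $\varepsilon_j$ lies in $\QQ_p[G]$, not in $\ZZ_p[G]$, so there is no ring ``$\ZZ_p[G]\varepsilon_j$'' to work over; the integral argument must take place over $\ZZ_p[G]\varepsilon$, with passage to $\varepsilon_j$ only after tensoring with $\QQ_p$. More seriously, $\varepsilon H^1_T(\co_{L,S},\ZZ_p(1-j))$ is only assumed $\ZZ_p$-free, not free over the group ring, so your Step~2 hypothesis ``$H^1(C)$ free of rank $r$'' fails; likewise $\varepsilon H^2_T$ need not be torsion. The paper handles this by working not with $R\Gamma_T$ directly but with the auxiliary complex $C^\varepsilon_{L,S,T}(j) = \varepsilon(R\Gamma_T(\co_{L,S},\ZZ_p(1-j))[1]\oplus Y_L(-j)[-1])$, representing it explicitly as $F\xrightarrow{\psi}F$ with $F$ free, and expanding $\pi_j'(z_x)$ along minors of $\psi$ (equation~\eqref{first step}). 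Since $H^1$ of this complex is $\varepsilon H^2_T\oplus\varepsilon Y_L(-j)$ with the second summand free of rank $r^\varepsilon_j$, one has ${\rm Fitt}^{r^\varepsilon_j}(H^1) = {\rm Fitt}^0(\varepsilon H^2_T)$, and it is this identity that produces~\eqref{bks eq}.
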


\begin{remark}\label{injectivity}
One sees that $H^1(\co_{L,S},\ZZ_p)$ is always $\ZZ_p$-free. When $j \neq 1$, we see that
$\varepsilon H_T^1(\co_{L,S},\ZZ_p(1-j))$ is $\ZZ_p$-free if and only if the
composite map
\[ \varepsilon H^0(L,\QQ_p/\ZZ_p(1-j)) \to
\varepsilon H^1(\co_{L,S},\ZZ_p(1-j)) \to \varepsilon \bigoplus_{w\in T_L}H^1(\kappa(w),\ZZ_p(1-j))\]
is injective, where the first map is the natural boundary homomorphism.
 \end{remark}

 \begin{remark}
With the assumption that $\varepsilon \in I_G$ if $j=1$, we can use the complex $R\Gamma_T(\co_{L,S},\ZZ_p(1-j))[1] \oplus Y_L(-j)[-1]$ to construct an exact sequence of $\ZZ_p[G] \varepsilon$-modules of the form
 $$0 \to \varepsilon H^1_T(\co_{L,S},\ZZ_p(1-j)) \to F \to F \to \varepsilon (H_T^2(\co_{L,S},\ZZ_p(1-j))\oplus Y_L(-j) )\to 0$$
 where $F$ is both finitely generated and free (see \S \ref{zeta evidence}). This sequence is a natural analogue of classical `Tate sequences' (as discussed, for example, in \cite[\S 2.3]{bks1}) and plays a key role in our analysis. (For $j=1$ and the trivial character, a Tate sequence of similar kind is studied in \cite{GreitherK2} by Greither and the second author.)
\end{remark}

\begin{example} \label{example rubin stark}
If we identify $H_T^1(\co_{L,S},\ZZ_p(1))$ with the $p$-completion of the $(S,T)$-unit group $\co_{L,S,T}^\times$ of $L$ (see Example \ref{ex ST}) then in the case of $j =0$ the equality (\ref{bks eq}) recovers the `$\varepsilon$-component' of
the $p$-completion of \cite[Conj. 7.3]{bks1} and thus constitutes a refinement of a range of well-known conjectures
in the literature. For the same reason, in the setting of Example \ref{example2}, the $j=0$ case of the containment (\ref{rs inclusion}) recovers the
`$\varepsilon$-component' of the $p$-completion of the Rubin-Stark Conjecture \cite[Conj. B$'$]{rubinstark} for the data $(L/K,S,T,W_0^\varepsilon)$.
\end{example}

\begin{example} \label{minus example}Assume $K$ totally real, $L$ CM, $j\le 0$ and take $\varepsilon$ to be the idempotent $e_j^-$ in Example
\ref{example}(ii).

\noindent{}(i) In this case the inclusion (\ref{rs inclusion}) is unconditionally valid. To see this, note $r_j^\varepsilon = 0$ so
\[\eta_{L/K,S,T}^\varepsilon(j)=e^-_j\theta_{L/K,S,T}^\ast(j)=\theta_{L/K,S,T}(j)=\delta_{L/K,T}(j)\theta_{L/K,S}(j).\]
In addition, there is a natural exact sequence
\[ 0 \to \bigoplus_{v \in T} \ZZ_p[G] \xrightarrow{(1-{\N}v^{1-j}{\rm Fr}^{-1}_v)_v } \bigoplus_{v \in T}\ZZ_p[G] \to \bigoplus_{w\in T_L}H^1(\kappa(w),\ZZ_p(1-j)) \to 0\]
which implies
\begin{equation}\label{T equality} \delta_{L/K,T}(j)\cdot \ZZ_p[G]\varepsilon = \varepsilon {\rm Fitt}_{\ZZ_p[G]}^0\left(\bigoplus_{w\in T_L}H^1(\kappa(w),\ZZ_p(1-j)) \right)\end{equation}
and the assumed injectivity of the displayed map in Remark \ref{injectivity} implies this ideal is contained in $\varepsilon
{\rm Ann}_{\ZZ_p[G]}(H^0(L,\QQ_p/\ZZ_p(1-j)))$.
The claimed inclusion (\ref{rs inclusion}) thus follows from the fact that Deligne and Ribet \cite{DR} have shown that
 $a \cdot \theta_{L/K,S}(j)$ belongs to $\ZZ_p[G]$ for any $a$ in ${\rm Ann}_{\ZZ_p[G]}(H^0(L,\QQ_p/\ZZ_p(1-j)))$. 

\noindent{}(ii) If $j < 0$, then the conjectural equality (\ref{bks eq}) implies
\begin{multline*} \ZZ_p[G]\cdot\theta_{L/K,S,T}(j) =\varepsilon{\rm Fitt}_{\ZZ_p[G]}^0(H_T^2(\co_{L,S},\ZZ_p(1-j))) \\
\subset \varepsilon{\rm Fitt}_{\ZZ_p[G]}^0(H^2(\co_{L,S},\ZZ_p(1-j))) = \varepsilon\ZZ_p{\rm Fitt}_{\ZZ[G]}^0(K_{-2j}(\co_{L,S})),\end{multline*}
where the inclusion is true as $H^2(\co_{L,S},\ZZ_p(1-j))$ is a quotient of $H^2_T(\co_{L,S},\ZZ_p(1-j))$ and the
final equality because the validity of the
Quillen-Lichtenbaum Conjecture gives a canonical isomorphism $H^2(\co_{L,S},\ZZ_p(1-j)) \simeq \ZZ_p K_{-2j}(\co_{L,S})$. Noting that $K_{-2j}(\co_{L}) \subset K_{-2j}(\co_{L,S})$ (see \cite[Prop. 5.7]{CKPS}), this displayed inclusion shows (\ref{bks eq}) refines the classical Coates-Sinnott Conjecture, which predicts $\ZZ_p[G]\cdot \theta_{L/K,S,T}(j) \subset \ZZ_p {\rm Ann}_{\ZZ[G]}(K_{-2j}(\co_L)).$
\end{example}

In \S\ref{zeta evidence} we interpret generalized Stark elements in terms of the theory of arithmetic zeta elements and use this connection to obtain the following evidence in support of Conjecture \ref{higherfitt}.

\begin{theorem}\label{evidence}
Conjecture 3.5 is valid in both of the following cases.
\begin{itemize}
\item[(i)] $L$ is an abelian extension of $\QQ$.
\item[(ii)] $K$ is totally real, $L$ is CM, $j\le 0$, $\varepsilon$ is
the idempotent $e_j^-$ in Example \ref{example}(ii), and
the Iwasawa $\mu$-invariant vanishes for the cyclotomic
$\ZZ_{p}$-extension $L_{\infty}/L$.
\end{itemize}
\end{theorem}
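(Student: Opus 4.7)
The plan is to reinterpret the generalized Stark element $\eta = \eta_{L/K,S,T}^{\varepsilon}(j)$ as the image, under a canonical trivialisation, of an equivariant zeta element attached to the motive $h^0(\Spec L)(j)$, and then to deduce the Fitting ideal identity (\ref{bks eq}) from appropriate integral cases of the equivariant Tamagawa number conjecture (eTNC) for Tate motives.

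The first step is to construct, from the complex $C := R\Gamma_T(\co_{L,S},\ZZ_p(1-j))[1] \oplus Y_L(-j)[-1]$, the four-term exact sequence of finitely generated free $\ZZ_p[G]\varepsilon$-modules
\[ 0 \to \varepsilon H^1_T(\co_{L,S},\ZZ_p(1-j)) \to F \xrightarrow{\phi} F \to \varepsilon\bigl(H^2_T(\co_{L,S},\ZZ_p(1-j)) \oplus Y_L(-j)\bigr) \to 0 \]
mentioned in the remark following Conjecture \ref{higherfitt}. Under Hypothesis \ref{hypo}, $\varepsilon Y_L(-j)$ is free on the basis $\{\varepsilon\cdot w(-j) : w \in W_j^\varepsilon\}$, so via the period-regulator isomorphism $\lambda_j$ and the defining identity for $\eta$ one can rewrite the transition matrix of $\phi$ in terms of $\varepsilon_j\theta^\ast_{L/K,S,T}(j)$. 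Standard exterior-algebra manipulations, together with the Fitting-ideal machinery developed in \cite{bks1}, should then yield the key reformulation: the equality (\ref{bks eq}) holds for $\eta$ if and only if a canonical basis of $\det_{\ZZ_p[G]\varepsilon}(\varepsilon C)$ provided by (the $p$-part of) eTNC for $h^0(\Spec L)(j)$ is indeed integral in the expected sense. This algebraic reduction, which is essentially a weight-$w$ analogue of the arguments of \cite[\S 5]{bks1}, is the main technical step; the subtlety lies in checking that the Bloch-Kato and Besser exponential maps used in the definition of $\lambda_j$ intertwine the Fitting-ideal description of $H^2_T$ with the integral structure of the zeta element at each weight simultaneously.

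For case (i), with $L/\QQ$ abelian and $p$ odd, I would invoke the known $p$-parts of eTNC for Tate motives over absolutely abelian fields (Burns-Greither, Flach, Huber-Kings). These results furnish the required integral zeta element in the equivariant determinant lattice, so the reformulation above immediately yields the equality (\ref{bks eq}) and hence the containment (\ref{rs inclusion}).

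For case (ii), with $\varepsilon = e_j^-$ and $j \le 0$, I would argue by descent from the cyclotomic $\ZZ_p$-extension $L_\infty/L$. The main conjecture of Iwasawa theory for totally real fields, proved by Wiles, determines the minus component of the relevant characteristic ideal in terms of the $p$-adic $L$-function of Deligne-Ribet; the assumed vanishing of the Iwasawa $\mu$-invariant then upgrades the characteristic-ideal statement to an integral equality of Fitting ideals over the Iwasawa algebra. Specialising the associated Iwasawa zeta element at the twist $1-j$ and descending to finite level is expected to produce exactly $\eta_{L/K,S,T}^{e_j^-}(j)$ after applying $\lambda_j$. The main obstacle in this half is ensuring compatibility of descent with both the trivialisation $\lambda_j$ and the Fitting-ideal passage of Step 1: the $\mu=0$ hypothesis is what guarantees no integral information is lost in reducing from the Iwasawa algebra to $\ZZ_p[G]\varepsilon$, and an Artin-Verdier style duality argument should control the descent spectral sequence in cohomological degree two. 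Combining these inputs with Step 1 then gives (\ref{bks eq}) for $e_j^-$, completing the proof.
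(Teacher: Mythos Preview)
Your proposal is correct and follows essentially the same approach as the paper: the algebraic reduction you describe is exactly Theorem~\ref{thm1} (the existence of an integral $\ZZ_p[G]\varepsilon$-basis of the determinant lattice with the zeta-element property implies Conjecture~\ref{higherfitt}), and your inputs for the two cases---Burns--Greither/Flach for (i), Wiles' main conjecture plus $\mu=0$ via Iwasawa descent for (ii)---are precisely those the paper invokes (for (ii) the paper defers to \cite[Cor.~3.18]{bks2}, whose argument is the descent you sketch). One minor point: your ``if and only if'' should just be ``if''---only the implication from the eTNC basis to (\ref{bks eq}) is proved or needed.
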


We end this section by stating some functorial properties of Conjecture \ref{higherfitt}.

\begin{proposition} \label{functorial1}
Let $(L'/K,S',T', \varepsilon')$ be as in Proposition \ref{property stark}.
\begin{itemize}
\item[(i)] Suppose $S'=S$ and $T'=T$. Then (\ref{bks eq}) (resp. (\ref{rs inclusion})) for $(L'/K,S,T,\varepsilon' ,j )$ implies (\ref{bks eq}) (resp. (\ref{rs inclusion})) for $(L/K,S,T,\varepsilon,j )$.
\item[(ii)] Suppose that $L'=L$, $T'=T$ and $\varepsilon'=\varepsilon$. Then (\ref{rs inclusion}) for $(L/K,S,T,\varepsilon,j)$ implies that for $(L/K,S',T,\varepsilon,j)$.
\item[(iii)] Suppose that $L'=L$, $S'=S$ and $\varepsilon'=\varepsilon$. Then (\ref{rs inclusion}) for $(L/K,S,T,\varepsilon,j)$ implies that for $(L/K,S,T',\varepsilon,j)$.
\end{itemize}
\end{proposition}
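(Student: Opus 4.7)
My plan is to prove the three parts in turn, each time using Proposition \ref{property stark} to transport the Stark element across the change of data and then translating the resulting identity into the required integrality or Fitting-ideal statement.

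For (i), the hypothesis $S'=S$, $T'=T$ makes the prefactor in Proposition \ref{property stark} trivial, so corestriction sends $\eta^{\varepsilon'}_{L'/K,S,T}(j)$ to $\eta^{\varepsilon}_{L/K,S,T}(j)$. Applying Lemma \ref{lemma phi} with $H={\rm Gal}(L'/L)$ then realises the left-hand side of (\ref{bks eq}) for $L/K$ as the image, under the natural projection $\pi : \ZZ_p[G'] \twoheadrightarrow \ZZ_p[G]$, of the corresponding set for $L'/K$. On the other hand, the derived base-change isomorphism
\[ \ZZ_p[G]\otimes^{\mathbb L}_{\ZZ_p[G']} R\Gamma_T(\co_{L',S}, \ZZ_p(1-j)) \simeq R\Gamma_T(\co_{L,S}, \ZZ_p(1-j)) \]
allows one to descend the Tate-style $4$-term exact sequence of $\ZZ_p[G']\varepsilon'$-modules for $L'/K$ to the analogous sequence of $\ZZ_p[G]\varepsilon$-modules for $L/K$, and the standard functoriality of initial Fitting ideals under surjective ring maps then identifies $\pi(\varepsilon'\cdot\Fitt^0_{\ZZ_p[G']}(H^2_T(\co_{L',S}, \ZZ_p(1-j))))$ with $\varepsilon\cdot\Fitt^0_{\ZZ_p[G]}(H^2_T(\co_{L,S}, \ZZ_p(1-j)))$. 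Combining these observations, the $L'/K$ case of (\ref{bks eq}) yields the $L/K$ case, whence (\ref{rs inclusion}) for $L/K$ also follows.

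For (ii), Proposition \ref{property stark} gives
\[ \eta^\varepsilon_{L/K,S',T}(j) = \Bigl(\prod_{v\in S'\setminus S}(1-{\rm N}v^{-j}{\rm Fr}_v^{-1})\Bigr)\cdot \eta^\varepsilon_{L/K,S,T}(j), \]
where $\eta^\varepsilon_{L/K,S,T}(j)$ is viewed inside $\QQ_p{\bigwedge}_{\ZZ_p[G]}^{r} H^1(\co_{L,S'},\ZZ_p(1-j))$ via the natural map. Because $S\supseteq S_p(K)$, every factor $(1-{\rm N}v^{-j}{\rm Fr}_v^{-1})$ lies in $\ZZ_p[G]$. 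The natural homomorphism $H^1_T(\co_{L,S},\ZZ_p(1-j)) \to H^1_T(\co_{L,S'},\ZZ_p(1-j))$ induces a $\ZZ_p[G]$-linear map of exterior biduals, and since multiplication by an element of $\ZZ_p[G]$ preserves any such bidual, (\ref{rs inclusion}) transfers from $(L/K,S,T,\varepsilon,j)$ to $(L/K,S',T,\varepsilon,j)$.

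For (iii), Proposition \ref{property stark} gives $\eta^\varepsilon_{L/K,S,T'}(j) = \delta\cdot\eta^\varepsilon_{L/K,S,T}(j)$ with $\delta := \delta_{L/K,T'\setminus T}(j) \in \ZZ_p[G]$. The hypothesis controlling $\varepsilon H^0(L,\QQ_p/\ZZ_p(1-j))$ (cf.\ Remark \ref{injectivity}) ensures that the natural map $H^1_{T'}(\co_{L,S},\ZZ_p(1-j)) \hookrightarrow H^1_T(\co_{L,S},\ZZ_p(1-j))$ is injective, with cokernel embedded in $\bigoplus_{w\in T'_L\setminus T_L}H^1(\kappa(w),\ZZ_p(1-j))$, whose initial $\ZZ_p[G]$-Fitting ideal is generated by $\delta$ (cf.\ the resolution recalled in Example \ref{minus example}(i)). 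The principal obstacle is to upgrade this to the assertion that multiplication by $\delta$ sends ${\bigcap}_{\ZZ_p[G]}^{r} H^1_T(\co_{L,S},\cdot)$ into ${\bigcap}_{\ZZ_p[G]}^{r} H^1_{T'}(\co_{L,S},\cdot)$, for which I would use the Tate-style $4$-term exact sequence $0 \to \varepsilon H^1_T \to F \to F \to \varepsilon(H^2_T \oplus Y_L(-j)) \to 0$ of the second Remark following Conjecture \ref{higherfitt} to realise both biduals as specific lattices inside a common determinantal module and then to trace how multiplication by $\delta$ shifts the first lattice into the second.
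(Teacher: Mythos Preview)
Your arguments for parts (i) and (ii) are correct and essentially coincide with the paper's own proof. For (i) the paper uses exactly the same ingredients: Proposition~\ref{property stark}, Lemma~\ref{lemma phi} with $H=\Gal(L'/L)$, the derived base-change isomorphism, and base-change of Fitting ideals. For (ii) the paper's route is the same as yours, though it is a bit more explicit about the step you pass over quickly: rather than just asserting that the inclusion $H^1_T(\co_{L,S},\ZZ_p(1-j)) \hookrightarrow H^1_T(\co_{L,S'},\ZZ_p(1-j))$ induces a map of exterior biduals, the paper observes that the cokernel of this inclusion embeds in $\bigoplus_{w\in (S'\setminus S)_L} H^1_{/f}(L_w,\ZZ_p(1-j))$, which is $\ZZ_p$-free, so the restriction on duals is \emph{surjective}, whence ${\bigcap}^r H^1_T(\co_{L,S},\cdot) \subset {\bigcap}^r H^1_T(\co_{L,S'},\cdot)$ as lattices in the common $\QQ_p$-space. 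Your assertion is correct, but this is the justification the paper supplies.

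For part (iii) there is a genuine gap. You correctly reduce to showing that multiplication by $\delta = \delta_{L/K,T'\setminus T}(j)$ carries ${\bigcap}^r H^1_T$ into ${\bigcap}^r H^1_{T'}$, and you correctly note that $\delta$ generates the initial Fitting ideal of the quotient. But your proposed resolution---realising both biduals inside a common determinantal module via the Tate-style sequences and then ``tracing how multiplication by $\delta$ shifts the first lattice into the second''---is only a sketch of a strategy, not an argument: the free modules $F$ in the two Tate sequences are not a priori the same, and making the comparison precise requires controlling the map $C_{L,S,T'}^\varepsilon(j)\to C_{L,S,T}^\varepsilon(j)$ at the level of explicit representatives and then checking compatibility with the maps $\pi_j'$ on both sides. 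This can likely be made to work, but it is real work you have not done. The paper takes a different and more economical line: it simply invokes the argument of Popescu \cite[Prop.~5.3.1]{popescuBC}, which handles exactly this ``enlarging $T$'' step for Rubin-Stark-type lattices directly from the exact triangle (\ref{T-mod def}) and the Fitting-ideal equality (\ref{T equality}), without passing through determinant modules. If you want a self-contained proof, it would be cleaner to adapt Popescu's direct argument than to develop the determinantal comparison you outline.
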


\begin{proof}
We know that $\varepsilon' R\Gamma_T(\co_{L',S},\ZZ_p(1-j))$ is a perfect complex of $\ZZ_p[G']$-modules and acyclic outside degrees one and two (see Lemma \ref{prop complex} below), and that
$$R\Gamma_T(\co_{L',S},\ZZ_p(1-j))\otimes_{\ZZ_p[G']}^{\mathbb{L}} \ZZ_p[G] \simeq R\Gamma_T(\co_{L,S},\ZZ_p(1-j))$$
(see \cite[Prop. 1.6.5]{FK}, for example). From this
we see that
$$\varepsilon' H^1_T(\co_{L',S},\ZZ_p(1-j))^{\Gal(L'/L)} \simeq \varepsilon H^1_T(\co_{L,S},\ZZ_p(1-j))$$
and that
$$\varepsilon' H^2_T(\co_{L',S},\ZZ_p(1-j)) \otimes_{\ZZ_p[G']} \ZZ_p[G] \simeq \varepsilon H^2_T(\co_{L,S},\ZZ_p(1-j)).$$
Noting this, claim (i) follows from Proposition \ref{property stark} and Lemma \ref{lemma phi}.

Next, we show claim (ii). We have an exact sequence
$$0 \to H_T^1(\co_{L,S},\ZZ_p(1-j)) \to H^1_T(\co_{L,S'},\ZZ_p(1-j)) \to \bigoplus_{w \in (S'\setminus S)_L} H^1_{/f}(L_w, \ZZ_p(1-j)).$$
Since the last term is $\ZZ_p$-free, we see that the restriction map
$$H^1_T(\co_{L,S'},\ZZ_p(1-j))^\ast \to H^1_T(\co_{L,S},\ZZ_p(1-j))^\ast$$
is surjective. From this, we see that
$${\bigcap}_{\ZZ_p[G]}^r H^1_T(\co_{L,S},\ZZ_p(1-j)) \subset {\bigcap}_{\ZZ_p[G]}^r H^1_T(\co_{L,S'},\ZZ_p(1-j)).$$
Now the assertion in claim (ii) is clear by Proposition \ref{property stark}.

One can prove claim (iii) in the same way as \cite[Prop. 5.3.1]{popescuBC} by using the exact triangle (\ref{T-mod def}) and the equality (\ref{T equality}).
%
%
%
\end{proof}

\subsection{Congruences between Stark elements of differing weights}\ \

\

The `refined class number formula for $\mathbb{G}_m$', as independently conjectured by Mazur-Rubin \cite{MRGm} and the third author \cite{sano},
constitutes a family of congruence relations between Stark elements of weight zero and differing ranks. In this section we formulate precise
families of conjectural congruences between Stark elements of fixed rank and different weights.

At the outset we fix an integer $j$, an idempotent $\varepsilon$ of $\ZZ_p[G]$ satisfying Hypothesis \ref{hypo} (with respect to $j$) and sets of places $S$ and $T$ as in \S\ref{ST}.

We set $r := r^\varepsilon_j$ and $W:=W_j^\varepsilon$ . We also fix a labeling $W=\{w_1,\ldots,w_r\}$ and use this to define the wedge product ${\bigwedge}_{w \in W}$.

We fix a positive integer $n$ such that $\mu_{p^n}\subset L^\times$ and use the cyclotomic character
\[ \chi_{\rm cyc}: G \to \Aut(\mu_{p^n})\simeq (\ZZ/p^n)^\times.\]

For each integer $a$ we also write ${\rm tw}_a$ for the ring automorphism of $\ZZ/p^n[G]$ that sends
 each element $\sigma$ of $G$ to $\chi_{\rm cyc}(\sigma)^a \sigma$. We then fix an integer $k$ and define $\delta$ to be the unique idempotent of $\ZZ_p[G]$ which projects to ${\rm tw}_{k-j}(\overline \varepsilon)$ in $\ZZ/p^n[G]$, where $\overline \varepsilon$ denotes the image in $\ZZ/p^n[G]$ of $\varepsilon$.

 Then, by an explicit computation, one checks that  $W^{\delta}_{k} = W$ (and hence $r_k^{\delta}=r$) and that $\delta Y_L(-k)$ is a free $\ZZ_p[G]\delta$-module of rank $r$ with basis $\{\delta \cdot w(-k) \mid w \in W\}$.

We next define a ${\rm tw}_{k-j}$-semilinear homomorphism
\begin{eqnarray*}
{\rm tw}_{j,k}: \varepsilon {\bigcap}_{\ZZ_p[G]}^r H^1_T(\co_{L,S},\ZZ_p(1-j)) \to \delta{\bigcap}_{\ZZ/p^n[G]}^r
H^1_T(\co_{L,S},\ZZ/p^n(1-k)) \label{twist lattice}
\end{eqnarray*}
that will play a key role in our conjectural congruences.

For simplicity, for each integer $a$ we set
$$H(\ZZ_p(a)):= H^1_T(\co_{L,S},\ZZ_p(a))\,\,\,\text{ and } \,\,\, H(\ZZ/p^n(a)):= H^1_T(\co_{L,S},\ZZ/p^n(a)).$$
Note that the natural map $H(\ZZ_p(1-j))\to H(\ZZ/p^n(1-j))$ induces a homomorphism
\begin{eqnarray}
{\bigcap}_{\ZZ_p[G]}^r H(\ZZ_p(1-j)) \to {\bigcap}_{\ZZ/p^n[G]}^r H(\ZZ/p^n(1-j)).  \label{mod p^n}
\end{eqnarray}

Recall that each $w \in S_\infty(L)$ determines the embedding $\iota_{w}: L \hookrightarrow \CC$ (see \S \ref{setup}) and for
each integer $i$ with $1\leq i \leq r$, we set
\begin{eqnarray}
\xi_i:=\iota_{w_i}^{-1}(e^{2\pi\sqrt{-1}/p^n}) \in H^0(L,\ZZ/p^n(1)). \label{def xi}
\end{eqnarray}
We write
\begin{eqnarray*}
c_i:  H(\ZZ/p^n(1-k))^\ast \to H(\ZZ/p^n(1-j))^\ast \label{def ci}
\end{eqnarray*}
for the map induced by cup product with $\xi_i^{\otimes (j-k)}$ and
$${\bigwedge}_{\ZZ/p^n[G]}^r  H(\ZZ/p^n(1-k))^\ast \to {\bigwedge}_{\ZZ/p^n[G]}^r H(\ZZ/p^n(1-j))^\ast$$
for the map sending each element $a_1\wedge\cdots \wedge a_r$ to $c_1(a_1)\wedge \cdots \wedge c_r(a_r).$
Taking the $\ZZ/p^n$-dual of the last map we obtain a homomorphism
$${\bigcap}_{\ZZ/p^n[G]}^r H(\ZZ/p^n(1-j)) \to {\bigcap}_{\ZZ/p^n[G]}^r  H(\ZZ/p^n(1-k))$$
%
and we define ${\rm tw}_{j,k}$ to be the composite of this homomorphism with (\ref{mod p^n}).


\begin{conjecture}\label{congruence conjecture} Fix an integer $j$, an idempotent $\varepsilon$ of $\ZZ_p[G]$ satisfying Hypothesis \ref{hypo} (with respect to $j$) and sets of places $S$ and $T$ as in \S\ref{ST}. Assume that the integer $k$, and associated idempotent $\delta$ defined above, are such that
\begin{itemize}
\item $\varepsilon H^1_T(\co_{L,S},\ZZ_p(1-j))$ and $\delta H^1_T(\co_{L,S},\ZZ_p(1-k))$ are both $\ZZ_p$-free;
\item $T=\emptyset$ if either $j=1$ or $k=1$;
\item $\varepsilon\in I_G$ if $j=1$;
\item $\delta\in I_G$ if $k=1$.
\end{itemize}

Then, if the containment (\ref{rs inclusion}) is valid for both pairs $(\varepsilon,j)$ and
$(\delta, k)$, one has
$${\rm tw}_{j,k}(\eta_{L/K,S,T}^{\varepsilon}(j)) = \eta_{L/K,S,T}^{\delta}(k)$$
in the finite module $\delta{\bigcap}_{\ZZ/p^n[G]}^r  H^1_T(\co_{L,S},\ZZ/p^n(1-k))$.
\end{conjecture}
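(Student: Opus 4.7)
The strategy is to view the desired congruence as the mod $p^n$ shadow of a twist compatibility between the zeta-element formulations of the equivariant Tamagawa number conjecture for the motives $h^0(\Spec L)(j)$ and $h^0(\Spec L)(k)$. Granting the containment (\ref{rs inclusion}) for both pairs $(\varepsilon,j)$ and $(\delta,k)$, the two Stark elements have well-defined images in the finite modules $\bigcap^r_{\ZZ/p^n[G]} H^1_T(\co_{L,S},\ZZ/p^n(1-j))$ and $\bigcap^r_{\ZZ/p^n[G]} H^1_T(\co_{L,S},\ZZ/p^n(1-k))$ respectively, and the claim is that these images correspond under ${\rm tw}_{j,k}$.

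I would proceed in four steps. First, identify ${\rm tw}_{j,k}$, which is defined via cup product with $\xi_i^{\otimes(j-k)}$, with the map induced on biduals by the canonical quasi-isomorphism of perfect complexes of $(\ZZ/p^n)[G]$-modules
\[ R\Gamma_T(\co_{L,S},\ZZ/p^n(1-j)) \otimes_{\ZZ/p^n} \ZZ/p^n(j-k) \xrightarrow{\sim} R\Gamma_T(\co_{L,S},\ZZ/p^n(1-k)),\]
once the Tate twist $\ZZ/p^n(j-k)$ is trivialized coordinate-wise using the basis $\{\xi_i^{\otimes(j-k)}\}$ attached to the chosen places $w_i$. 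Second, prove the crucial twist compatibility of the period-regulator isomorphisms: modulo $p^n$, the isomorphism $\lambda_j$, after twisting source and target by $\ZZ/p^n(j-k)$, agrees with $\lambda_k$. This requires a mod $p^n$ matching between the Borel regulator, the classical Dirichlet regulator with the $p$-adic exponential, and Besser's syntomic description of the Bloch-Kato exponential at the appropriate weights, together with the elementary archimedean identity relating the bases $w(-j)$ and $w(-k)$ under $\xi^{\otimes(k-j)}$. Third, on the analytic side, verify
\[ \varepsilon_j\, \theta^{\ast}_{L/K,S,T}(j) \equiv \delta_k\, \theta^{\ast}_{L/K,S,T}(k) \pmod{p^n},\]
under the identification of $\ZZ/p^n[G]\overline\varepsilon$ with $\ZZ/p^n[G]\overline\delta$ given by ${\rm tw}_{k-j}$; this is a Kummer-type congruence for $S$-truncated $T$-modified $L$-values and, for $L/\QQ$ abelian, follows from the $p$-adic interpolation of Dirichlet $L$-functions. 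Fourth, combine: the defining identity $\lambda_j(\eta^\varepsilon_{L/K,S,T}(j)) = \varepsilon_j\theta^{\ast}(j)\cdot\bigwedge_{w \in W} w(-j)$ transforms, via the first three steps, into the corresponding identity for $\eta^\delta_{L/K,S,T}(k)$ modulo $p^n$, yielding the congruence.

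The main obstacle is the second step. The definitions of $\lambda_j$ differ genuinely across the ranges $j<0$, $j \in \{0,1\}$, and $j>1$, invoking respectively the Borel regulator, the Dirichlet regulator together with the $p$-adic exponential, and the syntomic realization of the Bloch-Kato exponential due to Besser. Showing that these assemble into a single twist-equivariant family modulo $p^n$ is tantamount to a compatibility statement for the Bloch-Kato conjecture across different weights, and outside of settings in which the underlying ETNC is already established---for instance $L/\QQ$ abelian, or the CM minus part of Theorem \ref{evidence}---this seems to demand substantial new input. For the accessible cases a natural additional reduction is first to prove the congruence after replacing $L$ by $L(\mu_{p^n})$, where the cyclotomic character factors through $G$ in a visible way, and then to descend using the functoriality recorded in Proposition \ref{property stark} together with Lemma \ref{lemma phi}.
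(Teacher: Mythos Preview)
The statement you are attempting to prove is labeled in the paper as a \emph{Conjecture}, and the paper makes no claim to prove it in general; there is therefore no ``paper's own proof'' to compare your proposal against. What the paper does establish is Theorem~\ref{cong evidence}, which verifies the conjecture only in the special settings (i) $K=\QQ$ with $\varepsilon=e_j^+$, via Deligne--Soul\'e cyclotomic elements and Kato's explicit reciprocity law; (ii) the rank-zero CM minus case via Deligne--Ribet; and (iii) a comparison with Solomon's conjecture. You yourself identify step~2 (twist-compatibility of the period-regulator maps $\lambda_j$ across the ranges $j<0$, $j\in\{0,1\}$, $j>1$) as the main obstacle and concede that outside cases where the relevant ETNC is known it ``demands substantial new input''; that is exactly why the statement remains conjectural.

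There is also a conceptual problem with your step~3 as written. The leading terms $\theta^\ast_{L/K,S,T}(j)$ lie in $\CC_p[G]^\times$ and are in general transcendental, so a congruence ``modulo $p^n$'' between $\varepsilon_j\theta^\ast(j)$ and $\delta_k\theta^\ast(k)$ has no intrinsic meaning. Integrality and the congruence only make sense \emph{after} dividing out the periods encoded in $\lambda_j$, i.e.\ at the level of the Stark elements themselves, so your steps~2 and~3 cannot be decoupled as cleanly as you propose. In the rank-zero CM minus case this issue evaporates because the Stark element \emph{is} the $L$-value, known to be integral by Deligne--Ribet; in the $K=\QQ$ plus case the paper sidesteps it entirely by proving directly that $\eta_{L/\QQ,S}^+(j)=e_j^+c_{1-j}(\zeta_f)$ for every $j$, after which the twist relation ${\rm tw}_{j,k}(e_j^+c_{1-j}(\zeta_f))=e_k^+c_{1-k}(\zeta_f)_n$ is immediate from the very definition of the cyclotomic elements. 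Neither of the paper's arguments proceeds by separately matching regulators and $L$-values in the way you outline.
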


We discuss evidence for this conjecture in \S\ref{cong evi proof} and, in particular, prove the following result. This result (and its proof)
shows that the conjecture incorporates a wide selection of results ranging from classical explicit reciprocity law due to Artin-Hasse and Iwasawa
to the classical congruences of Kummer. In this result we use the notation of Example \ref{example}.

\begin{theorem}\label{cong evidence} Assume $K$ is totally real and $L$ is CM.
\begin{itemize}
\item[(i)] If $K = \QQ$, then for all integers $j$ and $k$ Conjecture \ref{congruence conjecture} is valid with $\varepsilon = e^+_j$.

\item[(ii)] For all non-positive integers $j$ and $k$
Conjecture \ref{congruence conjecture} is valid with $\varepsilon = e_j^-$.

\item[(iii)] Conjecture \ref{congruence conjecture} for the data $T = \emptyset, j = 0$, $\varepsilon = e^+$ and
$k = 1$ is a refinement of the `Congruence Conjecture' \cite[{${\rm CC}(L/K,S,p,n-1)$}]{solomon} of Solomon.
\end{itemize}
\end{theorem}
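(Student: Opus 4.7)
For part (ii) we first unwind the definitions. Since $K$ is totally real and $\varepsilon = e_j^-$, Example~\ref{example}(ii) gives $r_j^- = 0 = r_k^-$, and Hypothesis~\ref{hypo} is trivially satisfied. The containment (\ref{rs inclusion}) for both $(e_j^-,j)$ and $(e_k^-,k)$ is then the unconditional statement recorded in Example~\ref{minus example}(i). At rank zero $\lambda_j$ is the identity on $\varepsilon_j\CC_p[G]$, so the Stark elements read $\eta^{e_j^-}_{L/K,S,T}(j) = e_j^-\theta_{L/K,S,T}(j)$ and $\eta^{e_k^-}_{L/K,S,T}(k) = e_k^-\theta_{L/K,S,T}(k)$ (no passage to leading terms is required, as $j,k\le 0$ and the odd idempotent kills any trivial zero). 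Similarly, at rank zero the cup-product construction of ${\rm tw}_{j,k}$ collapses and the prescribed ${\rm tw}_{k-j}$-semilinearity forces ${\rm tw}_{j,k}$ to be reduction modulo $p^n$ composed with the twist ${\rm tw}_{k-j}$. The conjectural equality therefore reads
\[
{\rm tw}_{k-j}\bigl(e_j^-\theta_{L/K,S,T}(j)\bigr)\equiv e_k^-\theta_{L/K,S,T}(k)\pmod{p^n},
\]
which is the integral Kummer-type congruence known to hold for the Deligne-Ribet $p$-adic $L$-function \cite{DR}.

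For part (i) the plan is to exploit the theory of zeta elements. By the functoriality Propositions~\ref{property stark} and~\ref{functorial1}, we first reduce to the case $L = \QQ(\mu_{fp^n})$ for an appropriate conductor $f$, so that $r_j^+ = 1$ and $W_j^+$ is a singleton; the various hypotheses on idempotents lying in $I_G$ are then readily verified using the identity ${\rm tw}_{k-j}(e_j^+) = e_k^+$. The machinery developed in \S\ref{zeta evidence} (and invoked to establish Theorem~\ref{evidence}(i)) provides in the cyclotomic tower a canonical element in Iwasawa cohomology whose specialisations at weight $-2j$ yield, up to an unambiguous Euler factor modification at places in $S\cup T$, the classical Deligne-Soul\'e cyclotomic elements in $H^1(\co_{L,S},\ZZ_p(1-j))$ when $j\le 0$, and (via the Bloch-Kato dual exponential recalled in Remark~\ref{besser rem}) the Beilinson elements when $j>0$. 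The map ${\rm tw}_{j,k}$ is exactly the transition map between different weight specialisations of this universal element, so the congruence predicted by Conjecture~\ref{congruence conjecture} becomes automatic from the interpolation property modulo~$p^n$. We expect the principal technical obstacle to be the bookkeeping of normalisation constants built into the period-regulator isomorphism $\lambda_j$ -- factors of $(2\pi\sqrt{-1})^j$ from the isomorphism (\ref{beta}), signs from the $-b_j$ convention used in defining $\lambda_j$ when $j<0$, and the $p$-adic $\Gamma$-factors arising from the exponential -- matching the normalisations of the universal zeta element; this ultimately reduces to a chain of explicit reciprocity laws of Artin-Hasse-Iwasawa and Beilinson-Huber-Wildeshaus type.

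For part (iii) we compare definitions. With $T=\emptyset$, $j=0$, $\varepsilon = e^+$ and $k=1$, the element $\eta^{e^+}_{L/K,S}(0)$ is (up to sign and the $e^+$-projection) the classical $p$-completed Rubin-Stark element of the maximal totally real subfield $L^+/K$, while $\eta^{e^+}_{L/K,S}(1)$ encodes, via the exact sequence (\ref{j=1 iso}) and the $p$-adic exponential, the $p$-adic logarithms of local $p$-units. Tracking the cup-product construction of ${\rm tw}_{0,1}$ with the generator $\xi_1^{-1}\in\mu_{p^n}$ shows that the resulting mod-$p^n$ equality in $\delta\bigcap^1_{\ZZ/p^n[G]} H^1_T(\co_{L,S},\ZZ/p^n)$ is precisely the assertion ${\rm CC}(L/K,S,p,n-1)$ of Solomon in \cite{solomon}. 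That our formulation is strictly stronger follows since our congruence retains information at the level of the full Galois group $G$ and is asserted without Solomon's auxiliary hypotheses (most notably those concerning cyclicity of relevant local factors).
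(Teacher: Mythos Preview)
Your treatment of part (ii) is correct and matches the paper's argument essentially verbatim: at rank zero the Stark elements are the Stickelberger values $\theta_{L/K,S,T}(j)$, the map ${\rm tw}_{j,k}$ reduces to the character twist ${\rm tw}_{k-j}$ on $\ZZ/p^n[G]$, and the required congruence is the Deligne--Ribet result.

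For part (i) your outline has the right overall shape --- reduce to $L=\QQ(\mu_f)$ with $p^n\mid f$ and $T=\emptyset$, then identify each $\eta^+_{L/\QQ,S}(j)$ with the Deligne--Soul\'e cyclotomic element $e_j^+c_{1-j}(\zeta_f)$, so that the twisting compatibility ${\rm tw}_{j,k}(e_j^+c_{1-j}(\zeta_f))=e_k^+c_{1-k}(\zeta_f)_n$ (which holds by construction) gives the conjecture. However, two points need correcting. First, the reference to \S\ref{zeta evidence} is misplaced: the paper does \emph{not} prove (i) by descending from an Iwasawa-theoretic zeta element or via the ETNC machinery; it proves directly that $\lambda_j(e_j^+c_{1-j}(\zeta_f))=e_j^+\theta^\ast_{L/\QQ,S}(j)\cdot w(-j)$. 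Second, you have the attribution of the key external inputs reversed. The cyclotomic elements $c_{1-j}(\zeta_f)$ are defined uniformly for all $j$; what changes with the sign of $j$ is the \emph{nature of $\lambda_j$}. For $j<0$ one must compute the Borel regulator of cyclotomic elements, and this is the theorem of Beilinson and H\"uber--Wildeshaus. For $j>0$ one must compute the dual exponential of cyclotomic elements, and this is Kato's generalized explicit reciprocity law \cite{katoiwasawa}, which you do not name. Your sentence ``Deligne--Soul\'e cyclotomic elements when $j\le 0$, and \dots\ the Beilinson elements when $j>0$'' inverts this.

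Part (iii) contains genuine errors. First, with $\varepsilon=e^+$, $j=0$, $k=1$, the associated idempotent $\delta$ is ${\rm tw}_1(e^+)=e^-$, so the weight-$(-2)$ Stark element in play is $\eta^{e^-}_{L/K,S}(1)$, not $\eta^{e^+}_{L/K,S}(1)$. Second, the rank is $r=[K:\QQ]$, not $1$; your ``$\bigcap^1$'' is wrong unless $K=\QQ$. Third, and more substantively, your account of why the conjecture \emph{refines} Solomon's is not the paper's. The paper introduces the homomorphism ${\rm rec}_p^\ast$ (induced by the dual of the local reciprocity map) from $\bigcap^r H^1(\co_{L,S},\ZZ_p)^-$ into Solomon's target module, and proves in Proposition~\ref{solomon stark} that ${\rm rec}_p^\ast(\eta^{e^-}_{L/K,S}(1))=(-1)^r\mathfrak{s}_{L/K,S}$ and ${\rm rec}_p^\ast({\rm tw}_{0,1}(a))=\chi_{\rm cyc}(\tau_1\cdots\tau_r)H_{L/K,S,n-1}(a,\cdot)$. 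Thus Conjecture~\ref{congruence conjecture} implies Solomon's conjecture after applying ${\rm rec}_p^\ast$, which need not be injective; \emph{that} is the sense in which it is a refinement. Your explanation via ``auxiliary hypotheses concerning cyclicity of local factors'' is not what the paper does and does not establish the implication.
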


\begin{remark}\label{cong evid rem} The proof of claim (i) relies both (if $j < 0$) on results of Beilinson and H\"uber-Wildeshaus on the cyclotomic elements of
Deligne-Soul\'e and (if $j > 0$) on Kato's generalized explicit reciprocity law, whilst  claim (ii) relies on results of Deligne and Ribet.
Claim (iii) is of interest both because Solomon's Conjecture (recalled in \S\ref{solomon section} below) is formulated as an explicit reciprocity
law for Rubin-Stark elements (extending that of Artin-Hasse and Iwasawa \cite{iwasawa}) and also because it allows us to interpret
the extensive numerical evidence in support of Solomon's conjecture in \cite{rs} as evidence for Conjecture \ref{congruence conjecture}.\end{remark}

\begin{remark} In a subsequent article we intend to explore connections between Conjecture \ref{congruence conjecture} and the very general
(conjectural) formalism discussed by Fukaya and Kato in \cite{FK}.\end{remark}

\begin{proposition} \label{functorial2}
Let $(L'/K,S',T',\varepsilon')$ be as in Proposition \ref{property stark}.
\begin{itemize}
\item[(i)] Suppose $S'=S$ and $T'=T$. Then Conjecture \ref{congruence conjecture} for $(L'/K,S,T,\varepsilon',j,k)$ implies that for $(L/K,S,T,\varepsilon,j,k)$.
\item[(ii)] Suppose that $L'=L$ and $\varepsilon'=\varepsilon$. Then Conjecture \ref{congruence conjecture} for $(L/K,S,T,\varepsilon,j,k)$ implies that for $(L/K,S',T',\varepsilon,j,k)$.
\end{itemize}
\end{proposition}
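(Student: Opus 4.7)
In both parts the strategy is to combine Proposition \ref{property stark}, which governs the functorial behaviour of the Stark elements, with the observation that the twist operation ${\rm tw}_{j,k}$ is itself compatible with the relevant functorial maps. For part (ii), applying Proposition \ref{property stark} with $L'=L$ (so that the corestriction is the identity) yields
\[
\eta^{\varepsilon}_{L/K,S',T'}(j) = \alpha(j)\cdot\eta^{\varepsilon}_{L/K,S,T}(j) \quad\text{and}\quad \eta^{\delta}_{L/K,S',T'}(k) = \alpha(k)\cdot\eta^{\delta}_{L/K,S,T}(k),
\]
where $\alpha(a):=\delta_{L/K,T'\setminus T}(a)\cdot\prod_{v\in S'\setminus S}(1-{\N}v^{-a}{\rm Fr}_v^{-1})$. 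Each $v\in(S'\setminus S)\cup(T'\setminus T)$ is unramified in $L$ and lies outside $S_p(K)$, so $\chi_{\rm cyc}({\rm Fr}_v)\equiv{\N}v\pmod{p^n}$, whence a direct computation using the defining formula for ${\rm tw}_{k-j}$ gives ${\rm tw}_{k-j}(\alpha(j))\equiv\alpha(k)\pmod{p^n}$. The ${\rm tw}_{k-j}$-semilinearity of ${\rm tw}_{j,k}$, combined with the assumed identity for $(L,S,T,\varepsilon,j,k)$, then gives the conclusion.

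For part (i), Proposition \ref{property stark} applied once to $(\varepsilon',j)$ and once to $(\delta',k)$ (with $S'=S$ and $T'=T$, so that the correction factors reduce to $1$) shows that the map induced by the corestriction ${\rm Cor}_{L'/L}$ sends $\eta^{\varepsilon'}_{L'/K,S,T}(j)\mapsto\eta^{\varepsilon}_{L/K,S,T}(j)$ and $\eta^{\delta'}_{L'/K,S,T}(k)\mapsto\eta^{\delta}_{L/K,S,T}(k)$. It therefore remains to check that ${\rm tw}_{j,k}$ over $L'$ intertwines with ${\rm tw}_{j,k}$ over $L$ via these corestriction-induced maps on exterior biduals. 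Since $\mu_{p^n}\subset L\subset L'$, the elements $\xi_i$ defined in (\ref{def xi}) coincide over $L$ and $L'$, and the projection formula ${\rm Cor}(x\cup{\rm res}(y)) = {\rm Cor}(x)\cup y$ in \'etale cohomology then shows that the dual cup-product maps $c_i$ defining ${\rm tw}_{j,k}$ commute with the pullback maps dual to ${\rm Cor}_{L'/L}$. Taking the $r$-th exterior product, dualising, and applying Lemma \ref{lemma phi} to translate between $\ZZ_p[G']$- and $\ZZ_p[G]$-wedges via the norm map yields the required compatibility.

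The main technical obstacle lies in verifying this last compatibility in part (i): although the underlying projection-formula identity is standard, tracing it through the layered dualisations used to define ${\rm tw}_{j,k}$ and through the norm-theoretic identification of Lemma \ref{lemma phi} requires careful bookkeeping. Once this is accomplished, both claims of the proposition follow formally from Proposition \ref{property stark} and the ${\rm tw}_{k-j}$-semilinearity of ${\rm tw}_{j,k}$.
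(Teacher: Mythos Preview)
Your proposal is correct and follows essentially the same approach as the paper: for (ii) you use Proposition \ref{property stark} together with the congruence ${\rm tw}_{k-j}(\alpha(j))\equiv\alpha(k)\pmod{p^n}$ and the ${\rm tw}_{k-j}$-semilinearity of ${\rm tw}_{j,k}$, while for (i) you use that the norm/corestriction maps intertwine ${\rm tw}_{j,k}$ over $L'$ with ${\rm tw}_{j,k}$ over $L$. The paper compresses your projection-formula argument for (i) into the single remark that, since $\mu_{p^n}\subset L$, the map ${\rm tw}_{j,k}$ is $\ZZ_p[\Gal(L'/L)]$-linear (so it commutes with $\N_{L'/L}^r$), and for (ii) it additionally invokes Proposition \ref{functorial1} to check that the hypothesis (\ref{rs inclusion}) for $(S,T)$ propagates to $(S',T')$---a bookkeeping step you should make explicit.
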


\begin{proof}
Note that, since $\mu_{p^n} \subset L$, ${\rm tw}_{j,k}$ is a $\ZZ_p[\Gal(L'/L)]$-homomorphism. Note also that ${\N}_{L'/L}^r (\eta_{L'/K,S,T}^{\varepsilon'}(j))=\eta_{L/K,S,T}^\varepsilon(j)$ and ${\N}_{L'/L}^r (\eta_{L'/K,S,T}^{\delta'}(j))=\eta_{L/K,S,T}^\delta(j)$ by Proposition \ref{property stark}, where $\N_{L'/L}^r=\N_{\Gal(L'/L)}^r$. Assuming Conjecture \ref{congruence conjecture} for $(L'/K,S,T,\varepsilon',j,k)$, we have
\begin{multline*}
{\rm tw}_{j,k}(\eta_{L/K,S,T}^\varepsilon(j))={\rm tw}_{j,k}({\N}_{L'/L}^r (\eta_{L'/K,S,T}^{\varepsilon'}(j))) \\
= {\N}_{L'/L}^r ({\rm tw}_{j,k}(\eta_{L'/K,S,T}^{\varepsilon'}(j))) \equiv {\N}_{L'/L}^r(\eta_{L'/K,S,T}^{\delta'}(j)  ) =  \eta_{L/K,S,T}^\delta(j)  \text{ (mod $p^n$)}.
\end{multline*}
Hence we have proved claim (i).

Since we have
$${\rm tw}_{k-j}(\delta_{L/K,T'\setminus T}(j)) \equiv \delta_{L/K,T'\setminus T}(k) \text{ (mod $p^n$)}$$
and
$${\rm tw}_{k-j}\left(\prod_{v \in S'\setminus S}(1-{\N} v^{-j} {\rm Fr}_v^{-1})\right) \equiv \prod_{v \in S'\setminus S}(1-{\N} v^{-k} {\rm Fr}_v^{-1}) \text{ (mod $p^n$)},$$
claim (ii) follows from the fact that ${\rm tw}_{j,k}$ is ${\rm tw}_{k-j}$-semilinear, and Propositions \ref{property stark} and \ref{functorial1}.
\end{proof}

\begin{proposition} Suppose that $v \notin S\cup T$ splits completely in $L$, and assume (\ref{rs inclusion}) for both $(L/K,S, T,\varepsilon,j)$ and $(L/K,S, T, \delta,k)$. Then Conjecture \ref{congruence conjecture} is valid for $(L/K,S\cup\{v\},T,\varepsilon,j,k)$. 
\end{proposition}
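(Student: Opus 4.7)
The plan is to show that, under the stated hypotheses, both sides of the claimed equality vanish, so the congruence holds trivially. The key point is that the Euler factor introduced upon enlarging $S$ to $S\cup\{v\}$ is divisible by $p^n$.

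I would first invoke Proposition \ref{property stark} with $L'=L$, $T'=T$, $\varepsilon'=\varepsilon$ and $S'=S\cup\{v\}$ to obtain the identity
\[ \eta_{L/K,S\cup\{v\},T}^{\varepsilon}(j)=(1-{\N}v^{-j}{\rm Fr}_v^{-1})\cdot\eta_{L/K,S,T}^{\varepsilon}(j), \]
together with the analogous formula with $(\varepsilon,j)$ replaced by $(\delta,k)$. Since $v$ splits completely in $L$, the Frobenius ${\rm Fr}_v$ is trivial in $G$, so these Euler factors simplify to the scalars $1-{\N}v^{-j}$ and $1-{\N}v^{-k}$ in $\ZZ_p$.

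Next I would observe that, because $v\notin S\supseteq S_p(K)$, the residue characteristic of $v$ differs from $p$, so $v$ is unramified in the cyclotomic extension $K(\mu_{p^n})\subseteq L$ and ${\rm Fr}_v$ acts on $\mu_{p^n}$ as the ${\N}v$-th power map; the triviality of ${\rm Fr}_v$ thus forces ${\N}v\equiv 1\pmod{p^n}$, whence both scalars $1-{\N}v^{-j}$ and $1-{\N}v^{-k}$ lie in $p^n\ZZ_p$.

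Finally, the hypothesis (\ref{rs inclusion}) at $(\varepsilon,j)$ and $(\delta,k)$ over $S$, together with Proposition \ref{functorial1}(ii) (which propagates the integral containment from $S$ to $S\cup\{v\}$), places both $\eta_{L/K,S\cup\{v\},T}^{\varepsilon}(j)$ and $\eta_{L/K,S\cup\{v\},T}^{\delta}(k)$ in $p^n$ times the corresponding exterior biduals. Since the map ${\rm tw}_{j,k}$ factors through the reduction (\ref{mod p^n}) modulo $p^n$, it sends $p^n$-multiples to $0$; the images of both sides of the asserted congruence therefore vanish, and the identity holds trivially. The only point requiring care is the tracking of integral structure when $S$ is enlarged to $S\cup\{v\}$, which is precisely what Proposition \ref{functorial1}(ii) supplies; beyond this bookkeeping there is no genuine obstacle to the argument.
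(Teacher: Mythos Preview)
Your argument is correct and follows the same route as the paper's own proof: both show that the Euler factor $1-{\N}v^{-j}$ (and $1-{\N}v^{-k}$) is divisible by $p^n$ because $\mu_{p^n}\subset L$ and $v$ splits completely, then combine this with the assumed containment (\ref{rs inclusion}) over $S$ and Proposition \ref{property stark} to conclude that both generalized Stark elements over $S\cup\{v\}$ lie in $p^n$ times the relevant exterior bidual, so that both sides of the asserted congruence vanish. The only cosmetic difference is that the paper records the containment in $p^n\cdot{\bigcap}_{\ZZ_p[G]}^r H^1_T(\co_{L,S},\ZZ_p(1-j))$ directly (using the $S$-bidual), whereas you pass to the $S\cup\{v\}$-bidual via Proposition \ref{functorial1}(ii); this is the same content.
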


\begin{proof} If $v$ is any such place, then $p^n $ divides $\#\kappa(v)^\times={\N}v-1$ (since $\mu_{p^n}\subset L$) and hence also both $(1-{\N}v^{-j})$ and $(1-{\N}v^{-k})$.
The stated assumptions and Proposition \ref{property stark} therefore imply that
\[ \eta_{L/K,S\cup\{v\},T}^\varepsilon(j)=(1-{\N}v^{-j})\eta_{L/K,S,T}^\varepsilon(j)\in p^n\cdot {\bigcap}_{\ZZ_p[G]}^r H^1_T(\co_{L,S},\ZZ_p(1-j))\]
and that
\[ \eta_{L/K,S\cup \{v\},T}^{\delta}(k)=(1-{\N}v^{-k})\eta_{L/K,S,T}^{\delta}(k) \in p^n\cdot
{\bigcap}_{\ZZ_p[G]}^r H^1_T(\co_{L,S},\ZZ_p(1-k))\]
and so both sides of the displayed equality in Conjecture \ref{congruence conjecture} for $(L/K,S\cup\{v\}, T,\varepsilon,j,k)$ vanish.
\end{proof}

\section{Zeta elements and the proof of Theorem \ref{evidence}}\label{zeta evidence}

In this subsection, we interpret generalized Stark elements in terms of the theory of arithmetic zeta elements and use this connection to prove Theorem \ref{evidence}.

\subsection{Perfect complexes}

Let $\varepsilon \in \ZZ_p[G]$ be any idempotent (we do not need to assume Hypothesis \ref{hypo} in this subsection). With $Z$ denoting either $\ZZ_p$ or $\ZZ/p^n$ for some natural number $n$ we define an object of $D(Z[G]\varepsilon)$ by setting
\[ C_{L,S,T}^{\varepsilon}(j)_Z := Z[G]\varepsilon \otimes^{\mathbb{L}}_{\ZZ_p[G]}( R\Gamma_{T}(\co_{L,S},\ZZ_p(1-j))[1] \oplus Y_L(-j)[-1] ) .\]
%

The properties of these complexes that we use are recorded in the following result.

We write $D^{\rm perf}(Z[G]\varepsilon)$ for the full triangulated subcategory of
$D(Z[G]\varepsilon)$ comprising complexes that are `perfect' (that is, isomorphic to a bounded complex of finitely generated projective $Z[G]\varepsilon$-modules).

\begin{lemma} \label{prop complex} The following claims are valid for all integers $j$.

\begin{itemize}
\item[(i)] $C_{L,S,T}^{\varepsilon}(j)_Z$ belongs to $D^{\rm perf}(Z[G]\varepsilon)$ and is acyclic outside degrees $-1, 0$ and $1$.
\item[(ii)] Assume that $\varepsilon H^1_T(\co_{L,S},\ZZ_p(1-j))$ is $\ZZ_p$-free if $Z=\ZZ/p^n$ and that $\varepsilon\in I_G$ if $j=1$. Then we have
$$H^{i}(C_{L,S,T}^\varepsilon(j)_Z)=\begin{cases}
0 &\text{if $i=-1$},\\
\varepsilon H^1_T(\co_{L,S},Z(1-j)) &\text{if $i=0$},\\
 \varepsilon H^2_T(\co_{L,S},Z(1-j)) \oplus \varepsilon (Y_L(-j)\otimes_{\ZZ_p}Z) &\text{if $i=1$}.
\end{cases}$$
Furthermore, we have a (non-canonical) isomorphism of $\QQ_p[G]$-modules
$$\QQ_p H^0(C_{L,S,T}^{\varepsilon}(j)_{\ZZ_p}) \simeq \QQ_p H^1(C_{L,S,T}^{\varepsilon}(j)_{\ZZ_p}).$$
\end{itemize}
\end{lemma}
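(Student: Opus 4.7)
I would tackle parts (i) and (ii) separately; the latter is a degree-by-degree cohomology computation followed by a $K_0$-level argument for the rational isomorphism.

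For part (i), I would invoke the classical fact that $R\Gamma_T(\co_{L,S}, \ZZ_p(1-j))$ is a perfect complex of $\ZZ_p[G]$-modules acyclic outside degrees $[0, 2]$ (see, e.g., \cite[Prop.~1.6.5]{FK}), so that its shift by $[1]$ is perfect and acyclic outside $[-1, 1]$. For $Y_L(-j)$, I would observe that as a $\ZZ_p[G]$-module it decomposes into a direct sum of induced modules $\ZZ_p[G/G_v]$ indexed by $v \in S_\infty(K)$, where $G_v$ is a decomposition group; since $|G_v| \in \{1, 2\}$ is invertible in $\ZZ_p$ (as $p$ is odd), the associated trace idempotent realizes each such summand as a direct summand of $\ZZ_p[G]$, hence as a projective $\ZZ_p[G]$-module. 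Thus $Y_L(-j)[-1]$ is perfect and concentrated in degree $1$, and the derived tensor product with $Z[G]\varepsilon$ preserves these properties, giving the asserted perfectness and acyclicity range of $C_{L,S,T}^{\varepsilon}(j)_Z$.

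For part (ii), the cohomology in degrees $0$ and $1$ is immediate from the direct-sum structure of the defining complex (using $Y_L(-j) \otimes^{\mathbb{L}}_{\ZZ_p} \ZZ/p^n = Y_L(-j)/p^n$, no Tor contribution, in the case $Z = \ZZ/p^n$). The essential point is the vanishing of $H^{-1}$, i.e.\ of $\varepsilon H^0_T(\co_{L,S}, Z(1-j))$. For $Z = \ZZ_p$: if $j \neq 1$, then $H^0(L, \ZZ_p(1-j)) = 0$ because $G_L$ acts on $\ZZ_p(1-j)$ via a character with infinite image; and if $j = 1$, then either $T \neq \emptyset$ forces $H^0_T = 0$ via the injectivity of the diagonal $\ZZ_p \hookrightarrow \bigoplus_{w \in T_L} \ZZ_p$, or else $T = \emptyset$ gives $H^0_T = \ZZ_p$, which is annihilated under the hypothesis $\varepsilon \in I_G$. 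For $Z = \ZZ/p^n$, the Bockstein short exact sequence
\[
0 \to \varepsilon H^0_T(\ZZ_p(1-j))/p^n \to \varepsilon H^0_T(\ZZ/p^n(1-j)) \to \varepsilon H^1_T(\ZZ_p(1-j))[p^n] \to 0,
\]
arising from the triangle $\ZZ_p \xrightarrow{p^n} \ZZ_p \to \ZZ/p^n$, reduces the vanishing to the $\ZZ_p$-case together with the $\ZZ_p$-freeness hypothesis on $\varepsilon H^1_T(\co_{L,S}, \ZZ_p(1-j))$.

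For the final rational isomorphism, the semisimplicity of $\QQ_p[G]\varepsilon$ (valid since $p$ is odd) reduces the claim to the $K_0$-identity $[\QQ_p \varepsilon H^1_T] = [\QQ_p \varepsilon H^2_T] + [\QQ_p \varepsilon Y_L(-j)]$ in $K_0(\QQ_p[G]\varepsilon)$. I would verify this character-by-character: for each $\chi \in \widehat G^\varepsilon$, the characterization of $\widehat G^\varepsilon_j$ in Remark \ref{exp idem}, combined with the period-regulator constructions of \S\ref{period-regulator}, yields the dimension formula $\dim_{\CC_p} e_\chi \CC_p H^1_T - \dim_{\CC_p} e_\chi \CC_p H^2_T = r^\varepsilon_j$, the hypothesis $\varepsilon \in I_G$ when $j = 1$ being precisely what kills the potential trivial-character Leopoldt-type defect. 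Summing over $\chi$ and using Hypothesis \ref{hypo} (so that $[\QQ_p \varepsilon Y_L(-j)] = r^\varepsilon_j [\QQ_p \varepsilon[G]]$) gives the desired $K_0$-identity, and semisimplicity then upgrades this identity to the required module isomorphism. The principal obstacle is this last step: establishing the uniform dimension formula requires invoking distinct arithmetic inputs across the four cases $j < 0$, $j = 0$, $j = 1$ and $j > 1$ (respectively Soul\'e together with the Borel regulator, Dirichlet's unit theorem, global class field theory, and Bloch-Kato syntomic cohomology), these being precisely the ingredients that underlie the definitions of $\lambda_j$ in \S\ref{period-regulator}.
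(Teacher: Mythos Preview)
Your treatment of part (i) and of the vanishing of $H^{-1}$ in part (ii) is essentially the same as the paper's, and correct.

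The argument for the final rational isomorphism, however, has a genuine gap. You invoke Remark \ref{exp idem} together with the period--regulator isomorphisms of \S\ref{period-regulator} to deduce the dimension formula $\dim_{\CC_p} e_\chi \CC_p H^1_T - \dim_{\CC_p} e_\chi \CC_p H^2_T = r^\varepsilon_j$ for every $\chi \in \widehat G^\varepsilon$. But Remark \ref{exp idem} opens with ``Lemma \ref{prop complex}(ii) below implies that\ldots'', so citing it here is circular. Independently of that, the isomorphisms $\lambda_j$ are only constructed on the $\varepsilon_j$-component, i.e.\ for $\chi \in \widehat G^\varepsilon_j$; for $\chi \notin \widehat G^\varepsilon_j$ (where $e_\chi H^2_T \neq 0$, as can happen for $j > 1$ absent Schneider's conjecture, or for $j = 0$) the period maps give no information and your argument supplies nothing to establish the dimension identity there. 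The ``principal obstacle'' you identify---assembling four separate arithmetic inputs across the cases $j<0$, $j=0$, $j=1$, $j>1$---is in fact a symptom of this: those inputs build $\lambda_j$ only on the part where $H^2_T$ vanishes, and that is not enough.

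The paper avoids this entirely. It uses Artin--Verdier duality together with the defining triangle (\ref{T-mod def}) to produce an exact triangle
\[
\bigoplus_{w\in T_L}R\Gamma(\kappa(w),\ZZ_p(1-j)) \to C_{L,S,T}^{1}(j)_{\ZZ_p} \to R\Hom_{\ZZ_p}(R\Gamma_c(\co_{L,S},\ZZ_p(j)),\ZZ_p)[-2] \to,
\]
and then observes that the $\QQ_p[G]$-equivariant Euler characteristics of both the residue-field term and of $R\Gamma_c(\co_{L,S},\QQ_p(j))$ vanish. Since $C^\varepsilon_{L,S,T}(j)_{\ZZ_p}$ is acyclic outside degrees $0$ and $1$, the Euler-characteristic identity $[\QQ_p H^0] = [\QQ_p H^1]$ follows uniformly in $j$ and for all characters at once, with no case analysis and no appeal to the period maps.
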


\begin{proof} Since $p$ is odd, it is well-known that $R\Gamma(\co_{L,S},\ZZ_p(1-j))$ belongs to $D^{\rm perf}(\ZZ_p[G])$ and is acyclic outside
degrees zero, one and two (see, for example, \cite[Prop. 1.6.5]{FK}). Claim (i) follows from this and the
fact that the complex $\bigoplus_{w\in T_L}R\Gamma(\kappa(w),Z(1-j))$ in the triangle (\ref{T-mod def}) belongs to
$D^{\rm perf}(Z[G])$ and is acyclic outside degrees zero and one.

To prove the first assertion of claim (ii) it suffices to show $\varepsilon H^0_T(\co_{L,S}, Z(1-j))$ vanishes under the stated assumptions.

If $j\neq 1$, then $H^0(\co_{L,S}, \ZZ_p(1-j))$, and hence also, $H_T^0(\co_{L,S}, \ZZ_p(1-j))$ vanishes. If $\varepsilon H_T^1(\co_{L,S}, \ZZ_p(1-j))$ is $\ZZ_p$-free, then the exact triangle
$$R\Gamma_T(\co_{L,S},\ZZ_p(1-j)) \stackrel{p^n}{\to} R\Gamma_T(\co_{L,S},\ZZ_p(1-j)) \to R\Gamma_T(\co_{L,S},\ZZ/p^n(1-j)) \to$$
implies $\varepsilon H_T^0(\co_{L,S}, \ZZ/p^n(1-j))$ vanishes.

Next, we consider the case when $j=1$. Recall that we set $T=\emptyset$ in this case (see \S \ref{ST}). Since $\varepsilon \in I_G$ by assumption, we have
$$\varepsilon H^0_T(\co_{L,S},Z)=\varepsilon H^0(\co_{L,S},Z)=\varepsilon \cdot Z=0.$$

%
%
%
To prove the remaining assertion of claim (ii) we write $R\Gamma_c(\co_{L,S},\ZZ_p(j))$ for the compactly supported cohomology complex of $\ZZ_p(j)$ and note that Artin-Verdier duality (as expressed, for example, in \cite[(6)]{BFetnc2}) combines with the triangle (\ref{T-mod def}) to give a
canonical exact triangle in $D^{\rm perf}(\ZZ_p[G])$ of the form
\begin{equation*}\bigoplus_{w\in T_L}R\Gamma(\kappa(w),\ZZ_p(1-j)) \to C_{L,S,T}^{1}(j)_{\ZZ_p} \to R\Hom_{\ZZ_p}(R\Gamma_c(\co_{L,S},\ZZ_p(j)),
\ZZ_p)[-2] \to . \end{equation*}
Then, since $C_{L,S,T}^{\varepsilon}(j)_{\ZZ_p}$ is acyclic outside degrees zero and one, the final assertion of claim (ii) follows from this triangle and the fact that the $\QQ_p[G]$-equivariant Euler characteristics of both $\varepsilon \bigoplus_{w\in T_L}R\Gamma(\kappa(w),\QQ_p(1-j))$ and $R\Gamma_c(\co_{L,S},\QQ_p(j))$ vanish. \end{proof}


\subsection{Zeta elements} We quickly review the definition of zeta elements in the context of Conjecture \ref{higherfitt}. To do this we fix
notation $L/K,G,p,S,T, j, \varepsilon$ and $\varepsilon_j$ as in \S \ref{section2}. We often abbreviate $C_{L,S,T}^\varepsilon(j)_{\ZZ_p}$ to $C_{L,S,T}^{\varepsilon}(j)$. When $\varepsilon=1$, we omit it from notations (so we denote $C_{L,S,T}^1(j)$ by $C_{L,S,T}(j)$, for example). 

The definition of $\varepsilon_j$ combines with Lemma \ref{prop complex}(ii) to imply $\QQ_p[G] \varepsilon_j \otimes_{\ZZ_p[G]}^\mathbb{L} C_{L,S,T}(j) $
is acyclic outside degrees zero and one and that there are canonical isomorphisms
\[ \varepsilon_j\QQ_p H^i(C_{L,S,T}(j))) \simeq \begin{cases}
\varepsilon_j  H_T^1(\co_{L,S},\QQ_p(1-j)) &\text{if $i = 0$,}\\
\varepsilon_j\QQ_p Y_{L}(-j) &\text{if $i = 1$.}\end{cases}\]
Since these $\QQ_p[G]\varepsilon_j$-modules are both free of rank $r^\varepsilon_j$ there is a canonical `passage to cohomology' isomorphism
 of $\QQ_p[G]\varepsilon_j$-modules
\begin{multline}\label{passage}
 \pi_j: \varepsilon_j \QQ_p {\det}_{\ZZ_p[G]}(C_{L,S,T}(j)) \\
  \xrightarrow{\sim} \varepsilon_j \QQ_p \left( {\bigwedge}_{\ZZ_p[G]}^{r^\varepsilon_j} H_T^1(\co_{L,S},\ZZ_p(1-j)) \otimes_{\ZZ_p[G]}
 {\bigwedge}_{\ZZ_p[G]}^{r^\varepsilon_j}  Y_{L}(j) \right) .\end{multline}
Here we identify $Y_{L}(j)$ with $Y_L(-j)^\ast$.

\begin{definition} \label{definition zeta element}
The zeta element associated to the data $(L/K,S,T,\varepsilon,j)$ is the unique element $z_{L/K,S,T}^{\varepsilon}(j)$ of
$\varepsilon_j\CC_p  {\det}_{\ZZ_p[G]}(C_{L,S,T}(j))$ that satisfies

$$\pi_j(z_{L/K,S,T}^\varepsilon(j))= \eta_{L/K,S,T}^\varepsilon(j) \otimes {\bigwedge}_{w \in W_j^{\varepsilon}} w(j),$$
or equivalently,
\[ ({\rm ev}_L \circ (\lambda_{j} \otimes {\rm id}) \circ \pi_j)(z_{L/K,S,T}^\varepsilon(j)) =
\varepsilon_j \theta_{L/K,S,T}^\ast(j), \]
where ${\rm ev}_{L}$ denotes the standard `evaluation' isomorphism
\[  {\bigwedge}_{\CC_p[G]}^{r^\varepsilon_j}\CC_pY_{L}(-j) \otimes_{\CC_p[G]}
 {\bigwedge}_{\CC_p[G]}^{r^\varepsilon_j} \CC_p Y_{L}(-j)^\ast
\simeq \CC_p[G]. \]
\end{definition}


\subsection{The proof of Theorem \ref{evidence}} In this section we prove the following results.

\begin{theorem} \label{thm1} If there exists a $\ZZ_p[G] \varepsilon$-basis $z$ of $\varepsilon {\det}_{\ZZ_p[G]}(C_{L,S,T}(j))$ with
$\varepsilon_j z = z_{L/K,S,T}^\varepsilon(j)$, then Conjecture \ref{higherfitt} is valid.
\end{theorem}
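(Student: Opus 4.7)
The plan is to convert the existence of an integral basis $z$ of $\varepsilon\det_{\ZZ_p[G]}(C_{L,S,T}^\varepsilon(j))$ into an explicit formula for the Stark element $\eta := \eta^{\varepsilon}_{L/K,S,T}(j)$ in terms of the matrix of a Tate-like sequence, from which both sides of (\ref{bks eq}) can be identified with the same ideal of minors.

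First I would use Lemma \ref{prop complex}(i,ii) and the standard Tate-sequence construction (recalled in the remark after Conjecture \ref{higherfitt}) to represent $C^{\varepsilon}_{L,S,T}(j)$ by a 2-term complex $[F \xrightarrow{\phi} F]$ in degrees $0$ and $1$ of finitely generated free $\ZZ_p[G]\varepsilon$-modules. This yields the 4-term exact sequence
$$0 \to U \to F \xrightarrow{\phi} F \to V \oplus Y \to 0,$$
with $U := \varepsilon H^1_T(\co_{L,S},\ZZ_p(1-j))$, $V := \varepsilon H^2_T(\co_{L,S},\ZZ_p(1-j))$, and $Y := \varepsilon Y_L(-j)$, the last being free of rank $r := r^{\varepsilon}_j$ by Hypothesis \ref{hypo}. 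Since the surjection $F \twoheadrightarrow V\oplus Y \twoheadrightarrow Y$ splits, I would then choose a basis $\{f_k\}_{k=1}^d$ of $F$ whose last $r$ elements project onto the canonical basis $\{w(-j)\}_{w\in W^{\varepsilon}_j}$ of $Y$. The matrix of $\phi$ in this basis will take the block form $\bigl(\begin{smallmatrix}M'\\ 0\end{smallmatrix}\bigr)$ with $M'$ of size $(d-r)\times d$, and the induced presentation of $V$ identifies
$$\varepsilon\cdot\Fitt^0_{\ZZ_p[G]}\bigl(H^2_T(\co_{L,S},\ZZ_p(1-j))\bigr) \;=\; \mathcal{I}_{M'},$$
the ideal generated by the $(d-r)\times(d-r)$ minors of $M'$.

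Next, the basis $\{f_k\}$ provides a canonical trivialization of $\det_{\ZZ_p[G]\varepsilon}(C^{\varepsilon}_{L,S,T}(j))$, under which the hypothesis writes $z$ as a unit $v\in(\ZZ_p[G]\varepsilon)^{\times}$. A Cramer/Pl\"ucker-type computation in the Knudsen–Mumford formalism, adapting the weight-$0$ argument of \cite[\S 5]{bks1}, will then describe the passage-to-cohomology map $\pi_j$ explicitly: it sends $z^{\varepsilon}_{L/K,S,T}(j) = \varepsilon_j z$ to $(\varepsilon_j v\cdot\eta_{M'})\otimes\bigwedge_{w\in W^{\varepsilon}_j} w(j)$, where $\eta_{M'}\in\varepsilon_j\QQ_p\bigwedge^{r}_{\ZZ_p[G]} U$ is the canonical element assembled from the $(d-r)$-minors of $M'$ via the Cramer adjugate identity (which guarantees $\eta_{M'}\in\bigwedge^{r}U_{\QQ}$). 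Comparing with the defining identity $\pi_j(z^{\varepsilon}_{L/K,S,T}(j)) = \eta\otimes\bigwedge w(j)$ will then yield the explicit formula $\eta = \varepsilon_j v\cdot\eta_{M'}$.

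Finally, a standard exterior-bidual argument in the spirit of Lemma \ref{lemma phi} identifies $\{\Phi(\eta_{M'})\mid\Phi\in\bigwedge^{r}_{\ZZ_p[G]} U^{\ast}\}$ with $\varepsilon_j\cdot\mathcal{I}_{M'}$. Since $v$ is a unit in $\ZZ_p[G]\varepsilon$, and since $(\varepsilon-\varepsilon_j)\cdot\mathcal{I}_{M'} = 0$ (a consequence of $\mathcal{I}_{M'}\subseteq\Ann(V)$ combined with the fact that $e_{\chi}V_{\QQ}\neq 0$ for each $\chi\in\widehat G^{\varepsilon}\smallsetminus\widehat G^{\varepsilon}_j$), it will follow that
$$\bigl\{\Phi(\eta) \;\bigm|\; \Phi\in{\textstyle\bigwedge}^{r}_{\ZZ_p[G]} U^{\ast}\bigr\} \;=\; \varepsilon\cdot\Fitt^0_{\ZZ_p[G]}\bigl(H^2_T(\co_{L,S},\ZZ_p(1-j))\bigr),$$
establishing both (\ref{bks eq}) and the containment (\ref{rs inclusion}) of Conjecture \ref{higherfitt}. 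The hard part will be the Cramer/Pl\"ucker computation in the second step: verifying that $\pi_j$ sends the canonical determinant basis precisely to the claimed explicit combination of minors requires careful tracking of signs and of the Knudsen–Mumford tensor product structure, and it is this computation---whose weight-$0$ analogue constitutes the technical core of \cite{bks1}---that carries the essential content of the theorem.
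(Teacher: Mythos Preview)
Your proposal is correct and follows essentially the same strategy as the paper: represent $C_{L,S,T}^{\varepsilon}(j)$ by a two-term free complex $F\xrightarrow{\psi}F$ with the basis chosen so that a subset of $r$ basis vectors maps to the canonical basis of $\varepsilon Y_L(-j)$, invoke the Cramer-type computation of \cite[Lem.~4.3]{bks1} to identify $\pi_j$ of the integral basis with an explicit sum of $(d-r)$-minors, and then read off both (\ref{bks eq}) and (\ref{rs inclusion}) from the fact that these minors generate $\varepsilon\Fitt^0_{\ZZ_p[G]}(H^2_T(\co_{L,S},\ZZ_p(1-j)))$.

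The only substantive difference is in your final step. The paper obtains $\{\Phi(\eta)\mid\Phi\in\bigwedge^r U^\ast\}=\mathcal{I}_{M'}$ \emph{directly} by observing that $F^\ast\to U^\ast$ is surjective (since $F/U=\im\psi$ is $\ZZ_p$-free) and computing in $\bigwedge^r F$, where the minor formula is manifestly integral; no reference to $\varepsilon_j$ is needed. You instead claim $\{\Phi(\eta_{M'})\}=\varepsilon_j\mathcal{I}_{M'}$ and then argue separately that $(\varepsilon-\varepsilon_j)\mathcal{I}_{M'}=0$. This detour is correct but unnecessary, and your citation of Lemma~\ref{lemma phi} is off the mark: that lemma concerns norm maps under base change, whereas the relevant input here is the surjectivity of $F^\ast\to U^\ast$. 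Once you replace that reference with the $\ZZ_p$-freeness observation, your argument and the paper's coincide.
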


\begin{corollary}\label{etnc proof} Theorem \ref{evidence} is valid.
\end{corollary}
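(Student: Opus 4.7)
The plan is to deduce the corollary from Theorem \ref{thm1} by reducing the existence of the required $\ZZ_p[G]\varepsilon$-basis $z$ of $\varepsilon\det_{\ZZ_p[G]}(C_{L,S,T}(j))$ with $\varepsilon_j z = z_{L/K,S,T}^\varepsilon(j)$ to known cases of the equivariant Tamagawa number conjecture (ETNC) for the Tate motive $h^0(\Spec L)(j)$. The construction of $\lambda_j$ in \S\ref{period-regulator} is explicitly modelled on the Bloch--Kato exponential map and the Borel/Dirichlet regulators, and Definition \ref{definition zeta element} normalizes $z_{L/K,S,T}^{\varepsilon}(j)$ precisely so that, via the passage-to-cohomology isomorphism $\pi_j$ and $\lambda_j$, it corresponds to $\varepsilon_j\theta_{L/K,S,T}^{\ast}(j)$. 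In this way, the existence of an integral basis $z$ as in Theorem \ref{thm1} is the $p$-part of the ETNC for $(h^0(\Spec L)(j),\ZZ_p[G]\varepsilon)$ (with $(S,T)$-modification); the equivalence is standard and can be checked using the formalism of, e.g., \cite{FK} together with Lemma \ref{prop complex}.

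For case (i), when $L$ is abelian over $\QQ$ I would invoke the fact that the $p$-part of the ETNC for Tate motives over abelian number fields has been established unconditionally (by Burns--Greither, then extended by Flach, and by Huber--Kings for the $p$-part at odd primes $p$), so a basis $z$ with the required interpolation property exists for every $j$ and every idempotent $\varepsilon$. Applying Theorem \ref{thm1} then yields Conjecture \ref{higherfitt} in this case.

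For case (ii), with $K$ totally real, $L$ CM, $j\le 0$ and $\varepsilon=e_j^-$, the relevant minus-part component of the ETNC reduces, after taking the inverse limit up the cyclotomic tower $L_\infty/L$, to the classical Iwasawa Main Conjecture for totally real fields proved by Wiles (building on Mazur--Wiles). The key additional input is the vanishing of the cyclotomic $\mu$-invariant of $L_\infty/L$: this allows the descent from the Iwasawa-theoretic statement to its finite-level consequence for $h^0(\Spec L)(j)_-$ to be carried out without loss of information, producing the desired zeta basis $z$. Combined with Theorem \ref{thm1}, this establishes Conjecture \ref{higherfitt} in case (ii).

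The main obstacle I anticipate is the careful bookkeeping required in step (ii) to match the $(S,T)$-modified complex $C_{L,S,T}^{\varepsilon}(j)_{\ZZ_p}$ and the normalization $\eta_{L/K,S,T}^{\varepsilon}(j) \otimes \bigwedge_{w\in W^{\varepsilon}_j} w(j)$ of Definition \ref{definition zeta element} with the formulation of the Iwasawa main conjecture; in particular one must verify that the Deligne--Ribet integrality of $\theta_{L/K,S,T}(j)$ used in Example \ref{minus example}(i), the Quillen--Lichtenbaum identification $H^2(\co_{L,S},\ZZ_p(1-j))\simeq \ZZ_p K_{-2j}(\co_{L,S})$, and the vanishing of $\mu$ together suffice to promote the characteristic-ideal statement of the main conjecture to the required $\det$-level equality on $\varepsilon\det_{\ZZ_p[G]}(C_{L,S,T}(j))$. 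For case (i), by contrast, the hard work is already contained in the cited ETNC results and the argument becomes a direct application of Theorem \ref{thm1}.
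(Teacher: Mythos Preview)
Your proposal is correct and follows essentially the same approach as the paper: reduce to Theorem \ref{thm1} by observing that the definition of $\lambda_j$ matches the maps in the ETNC for $(h^0(\Spec L)(j),\ZZ_p[G]\varepsilon)$, so that the existence of the required basis $z$ is exactly the $p$-part of that conjecture; then invoke Burns--Greither and Flach for case (i), and Wiles's main conjecture together with the $\mu=0$ hypothesis for case (ii). The paper handles case (ii) by citing \cite[Cor.~3.18]{bks2} rather than spelling out the descent argument, but the content is the same as what you outline.
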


\begin{proof} The first point to note is that the maps that are used in the explicit definition of the isomorphism $\lambda_{j}$ given in \S\ref{period-regulator} coincide with the maps that occur in the statement of the equivariant Tamagawa number conjecture for the pair $(h^0(\Spec L)(j),\ZZ_p[G]\varepsilon)$ (see \cite[Conj. 4(iv)]{BFetnc}). This fact is clear if $j \le 1$ and follows in the case $j > 1$ from the result of Besser recalled in Remark \ref{besser rem}.

Given this, and our definition of the element $z_{L/K,S,T}^\varepsilon(j)$, the latter conjecture implies the existence of
a $\ZZ_p[G]\varepsilon$-basis of $\varepsilon\cdot{\det}_{\ZZ_p[G]}(C_{L,S,T}(j))$ with the property stated in Theorem \ref{thm1}. We note that this conjecture is usually formulated without using the set $T$, but as noted in \cite[Prop. 3.4]{bks1} one can formulate a natural $T$-modified version of this conjecture,  whose validity is independent of the choice of $T$.

The result of Theorem \ref{evidence}(i) now follows directly from Theorem \ref{thm1} and the fact that if $L$ is abelian over $\QQ$, then the equivariant Tamagawa number conjecture for
$(h^0(\Spec L)(j),\ZZ_p[G])$ is known to be true (by work of the first author and Greither \cite{BG}, and of Flach \cite{flach2part}).

Theorem \ref{evidence}(ii) can be proved by the same method as in \cite[Cor. 3.18]{bks2}
by using the Iwasawa main conjecture proved by Wiles.
\end{proof}

The proof of Theorem \ref{thm1} occupies the rest of this section (and is motivated by the argument used to prove \cite[Th. 7.5]{bks1}). We assume that $\varepsilon H^1_T(\co_{L,S},\ZZ_p(1-j))$ is $\ZZ_p$-free and that $\varepsilon \in I_G$ if $j=1$.

We set $\mathcal{A} := \ZZ_p[G]\varepsilon$, $A := \QQ_p[G]\varepsilon$, $W:=W^\varepsilon_j$ and $r := r^\varepsilon_j$. We also label (and thereby order) the elements of $W$ as $\{w_i\}_{1\le i\le r}$.

Then Lemma \ref{prop complex}(ii) implies that $C_{L,S,T}^\varepsilon(j)$ is acyclic outside degrees zero and one and 
we can therefore choose a representative of $C_{L,S,T}^\varepsilon(j)$ of the form $F \stackrel{\psi}{\to} F$
with $F$ a free $\mathcal{A}$-module with basis $\{b_1,\ldots,b_d\}$ for some sufficiently large integer $d$ 
so that
%
the natural surjection
\[ F \to \coker (\psi) = H^1(C_{L,S,T}^{\varepsilon}(j))= \varepsilon H^2_T(\co_{L,S},\ZZ_p(1-j)) \oplus \varepsilon Y_L(-j) \]
sends $b_i$ with $1\leq i \leq r$ to $\varepsilon \cdot w_i(-j)$ and $\{b_{r+1},\ldots ,b_d\}$ to a set of generators of $\varepsilon H^2_T(\co_{L,S},\ZZ_p(1-j))$.
%
See \cite[\S 5.4]{bks1} for the detail of this construction. Note that the representative chosen in loc. cit. is of the form $P \to F$ with $P$ projective and $F$ free, but in the present case we can identify $P$ with $F$ by Swan's theorem (see \cite[(32.1)]{CR}). Also, note that the assumption that $\varepsilon H^1_T(\co_{L,S},\ZZ_p(1-j))$ is $\ZZ_p$-free is needed here.

We may therefore identify ${\det}_\mathcal{A}(C_{L,S,T}^{\varepsilon}(j))$ with ${\bigwedge}_\mathcal{A}^d F \otimes_\mathcal{A}
{\bigwedge}_\mathcal{A}^d F^\ast$. With respect to this identification, any
$\mathcal{A}$-basis of ${\det}_\mathcal{A}(C_{L,S,T}^{\varepsilon}(j))$ has the form
\[ z_x := x \cdot b_1\wedge\cdots\wedge b_d \otimes b_1^\ast \wedge \cdots \wedge b_d^\ast \]
with $x \in \mathcal{A}^\times$, where we write $b_i^\ast$ for the $\mathcal{A}$-linear dual of $b_i$.

Next we write
\[ \pi_j': \QQ_p {\rm det}_\mathcal{A}( C_{L,S,T}^{\varepsilon}(j)) \to \varepsilon_j \QQ_p {\rm det}_\mathcal{A}( C_{L,S,T}^{\varepsilon}(j)) \stackrel{\sim}{\to}
 \varepsilon_j{\bigwedge}_{\QQ_p[G]}^{r}H_T^1(\co_{L,S},\QQ_p(1-j))\]
for the composite homomorphism of $A$-modules in which the first map is `multiplication by $\varepsilon_j$' and the second is the composite of the isomorphism $\pi_j$ in (\ref{passage}) and
the isomorphism of $A$-modules $\varepsilon_j \QQ_p{\bigwedge}_{\ZZ_p[G]}^{r} Y_{L}(j) \stackrel{\sim}{\to} A\varepsilon_j$ that sends the element
$\varepsilon_j \cdot w_1(j) \wedge \cdots \wedge w_r(j)$ to $\varepsilon_j$.

Then, with this notation, the argument of \cite[Lem. 4.3]{bks1} implies that

\begin{eqnarray}\label{first step}\pi_j'( z_x) &=&(-1)^{r(d-r)}x \left( {\bigwedge}_{r< i \leq d}\psi_i \right)(b_1\wedge\cdots \wedge b_d) \\
&=& (-1)^{r(d-r)} x \sum_{\sigma \in \mathfrak{S}_{d,r}}
{\rm sgn}(\sigma)\det(\psi_i(b_{\sigma(k)}))_{r < i,k\leq d} b_{\sigma(1)}\wedge \cdots \wedge b_{\sigma(r)}\nonumber
\end{eqnarray}
with $\psi_i:=b_i^\ast \circ \psi \in F^\ast$ for each index $i$.

In particular, note that the element $( {\bigwedge}_{r< i \leq d}\psi_i )(b_1\wedge\cdots \wedge b_d)  $ of ${\bigwedge}_\mathcal{A}^r F$ lies in $\varepsilon_j{\bigwedge}_{\QQ_p[G]}^{r}H_T^1(\co_{L,S},\QQ_p(1-j))$, which is regarded as a submodule of $\QQ_p \bigwedge_\mathcal{A}^r F$ via the inclusion
\begin{eqnarray} \label{psi inclusion}
\varepsilon H_T^1(\co_{L,S},\ZZ_p(1-j))=H^0(C_{L,S,T}^\varepsilon(j)) =\ker \psi \hookrightarrow F.
\end{eqnarray}

Next we note that the matrix of the endomorphism $\psi$ with respect to the basis $\{b_1,\ldots,b_d\}$ of $F$ is $(\psi_i(b_k))_{1\leq i,k\leq d}$ and
 that $\psi_i=0$ for each $i$ with $1\leq i \leq r$ since the elements $\{ \varepsilon \cdot w_i(-j)\}_{1\le i\le r}$ are an $\mathcal{A}$-basis of $\varepsilon Y_{L}(-j)$. The
 matrices $\{ \det(\psi_i(b_{\sigma(k)}))_{r<i,k\leq d}\}_{\sigma \in \mathfrak{S}_{d,r}}$ are therefore a set of generators of the $\mathcal{A}$-module
\[ {\rm Fitt}_\mathcal{A}^r(H^1(C_{L,S,T}^{\varepsilon}(j))) = {\rm Fitt}_\mathcal{A}^0(\varepsilon H_T^2(\mathcal{O}_{L,S},\ZZ_p(1-j))) = \varepsilon  {\rm Fitt}_{\ZZ_p[G]}^0(H^2_T(\mathcal{O}_{L,S},\ZZ_p(1-j))),\]
where the first equality is valid since the $\mathcal{A}$-module $H^1(C_{L,S,T}^{\varepsilon}(j))$ is the direct sum of $\varepsilon H_T^2(\mathcal{O}_{L,S},\ZZ_p(1-j))$ and a free module $\varepsilon Y_L(-j)$ of rank $r$.

Note that the restriction map $F^\ast \to \varepsilon H_T^1(\co_{L,S},\ZZ_p(1-j))^\ast$ is surjective since the cokernel of (\ref{psi inclusion}) is $\ZZ_p$-free. This fact combines with the equality (\ref{first step})
to imply that
\begin{multline}\label{first version}
\left\{\Phi(\pi_j'(z_x)) \ \middle| \ \Phi\! \in\! \varepsilon {\bigwedge}_{\ZZ_p[G]}^r H^1_T(\mathcal{O}_{L,S},\ZZ_p(1-j))^\ast \right\} \\=
\varepsilon {\rm Fitt}_{\ZZ_p[G]}^0(H^2_T(\mathcal{O}_{L,S},\ZZ_p(1-j))).
\end{multline}

Now suppose that $\varepsilon_j \cdot z_x= z_{L/K,S,T}^\varepsilon(j)$. Then the definition of $z_{L/K,S,T}^\varepsilon(j)$ implies
$\pi_j'(z_x) =\eta_{L/K,S,T}^\varepsilon(j)$ and, given this,
%
%
%
%
 the result of Theorem \ref{thm1} follows directly from the equality (\ref{first version}).

\section{Some evidence for Conjecture \ref{congruence conjecture}}\label{cong evi proof}

In this section, we give a proof of Theorem \ref{cong evidence}. Throughout this section we assume $K$ totally real and $L$ CM.

\subsection{Deligne-Soul\'e elements, explicit reciprocity and Theorem \ref{cong evidence}(i)} In this subsection we assume $K=\QQ$ and $\varepsilon=e_j^+$. In this case we have $r:=r_j^{\varepsilon}=\# S_\infty(L)/G=\# S_\infty(\QQ)=1$ (see Example \ref{example}(i)).

By Proposition \ref{functorial2}, we may assume $L =\QQ(\mu_f)$ with $f \in \ZZ_{>0}$ such that $f \not\equiv 2 \ (\text{mod } 4)$. Also, we may assume $p^n \mid f$ and that $S$ is equal to the minimal set $\{\infty\}\cup \{\ell \mid f\}$, with $\infty$ the archimedean place of $\QQ$. Finally, we may assume $T=\emptyset$ since $p$ is odd. We note that $\varepsilon \in I_G$ is satisfied when $j=1$, and that $\varepsilon_j=\varepsilon$ holds when $j\neq 0$ (see Remarks \ref{exp idem}(i) and \ref{remark schneider}). In the following, we often omit $T$. (For example, we denote $\eta_{L/\QQ,S,\emptyset}^\varepsilon(j)$ by $\eta_{L/\QQ,S}^\varepsilon(j)$.) For simplicity, we denote $\eta_{L/\QQ,S}^{e_j^+}(j)$ by $\eta_{L/\QQ,S}^+(j)$.

Recall that $w \in S_\infty(L)/G$ determines the embedding $\iota_w : \overline \QQ \hookrightarrow \CC $ (see \S \ref{setup}). We set $\zeta_m:=\iota_w^{-1}(e^{2\pi \sqrt{-1}/m})$ for any integer $m$.

Recall the definition of `cyclotomic elements' of Deligne-Soul\'e.
First, for a positive integer $m$, define
$$c_{1-j}(\zeta_f)_m:={\rm Cor}_{\QQ(\mu_{p^m f})/L}((1-\zeta_{p^mf} )\otimes \zeta_{p^m}^{\otimes (-j)})\in H^1(\co_{L,S},\ZZ/p^m(1-j)). $$
Here we regard $(1-\zeta_{p^m f} )\otimes \zeta_{p^m}^{\otimes (-j)}$ as an element of $H^1(\ZZ[\mu_{p^m f},1/p],\ZZ/p^m (1-j))$ via the Kummer map
\begin{eqnarray*}
 \ZZ \left[\mu_{p^m f}, \frac1p \right]^\times \otimes \ZZ/p^m (-j)
&\to& H^1\left(\ZZ\left[\mu_{p^m f},\frac1p \right],\ZZ/p^m (1) \right)\otimes_\ZZ \ZZ/p^m (-j)\\
&\simeq& H^1\left(\ZZ\left[\mu_{p^m f},\frac1p\right],\ZZ/p^m (1-j)\right).
\end{eqnarray*}
The cyclotomic element is defined by the inverse limit
$$c_{1-j}(\zeta_f):=\varprojlim_m c_{1-j}(\zeta_f)_m \in \varprojlim_m H^1(\co_{L,S},\ZZ/p^m(1-j)) \simeq H^1(\co_{L,S},\ZZ_p(1-j)). $$

Noting that
$$\eta_{L/\QQ,S}^{+}(0)=2^{-1} \cdot (1-\zeta_f)(1-\zeta_f^{-1}) \in \ZZ_p \co_{L^+,S}^\times=e^+ H^1(\co_{L,S},\ZZ_p(1))$$
(see Example \ref{example2} and \cite[p.79]{tatebook}),
we see by definition that ${\rm tw}_{0,j}(\eta_{L/\QQ,S}^{+}(0))=e_j^+ c_{1-j}(\zeta_f)_n$. From this, we have
$${\rm tw}_{j,k}(e_j^+ c_{1-j}(\zeta_f))=e_k^+ c_{1-k}(\zeta_f)_n$$
for arbitrary integers $j$ and $k$.

Hence, it is sufficient to show that
\begin{eqnarray}
e_j^+c_{1-j}(\zeta_f)= \eta_{L/\QQ,S}^{+}(j). \nonumber
\end{eqnarray}

We may assume $j \neq 0$. Suppose first that $j <0$. In this case, by the definition of $\eta_{L/\QQ,S}^{+}(j)$, it is sufficient to show that the image of $e_j^+ c_{1-j}(\zeta_f)$ under the isomorphism
$$\lambda_{j}: e_j^+\CC_p H^1(\co_{L,S},\ZZ_p(1-j)) \simeq e_j^+\CC_p K_{1-2j}(\co_L) \simeq e_j^+\CC_p Y_L(-j),$$
is $e_j^+\theta_{L/\QQ,S}^\ast(j) \cdot w(-j)$. This is a direct consequence of the results of Beilinson and H\"uber-Wildeshaus \cite[Cor. 9.7]{HW} (see also \cite[Th. 5.2.1 and 5.2.2]{HK}).

Next, suppose $j >0$. Again, it is sufficient to show that the image of $e_j^+ c_{1-j}(\zeta_f)$ under the isomorphism $\lambda_{j}$ is $e_j^+ \theta_{L/\QQ,S}^\ast(j) \cdot w(-j)$. Recalling Remark \ref{besser rem}, we note that in this case $\lambda_{j}$ coincides with the map
$$e_j^+\CC_p H^1 (\co_{L,S},\ZZ_p(1-j)) \stackrel{{\rm exp}_p^\ast}{\simeq}  e_j^+( \CC_p \otimes_\QQ L)^\ast \stackrel{\alpha_j^\ast}{\simeq} e_j^+ \CC_p H_L(j)^{+,\ast}\simeq e_j^+\CC_p Y_L(-j),$$
where ${\rm exp}_p^\ast$ is the dual exponential map, $\alpha_j^\ast$ is induced by  (\ref{period map}) 
 and the last isomorphism is induced by (\ref{beta}) with the identification $Y_L(j)^\ast=Y_L(-j)$.

By using the explicit reciprocity law due to Kato \cite[Th. 5.12]{katoiwasawa} (see also \cite[Chap. II, Th. 2.1.7]{katolecture} and \cite[Th. 3.2.6]{HK}), we have
$${\rm exp}_p^\ast (c_{1-j}(\zeta_f))=\left(x \mapsto -\frac{1}{f^j} {\rm Tr}_{L/\QQ}(x d_j(\zeta_f))\right) \in L^\ast,$$
where $d_j$ is the polylogarithmic  function
$$d_j(t):=\frac{(-1)^j}{(j-1)!}  {\rm Li}_{1-j}(t)=\frac{(-1)^j}{(j-1)!} \left(\frac1t \frac{d}{dt}\right)^{j-1}\left(\frac{t}{1-t}\right). $$
From this and the classical formula
$$e_j^+\theta_{L/\QQ,S}^\ast(j)=\frac14 \left(\frac{2\pi \sqrt{-1}}{f}\right)^j \sum_{1\leq a \leq f, \ (a,f)=1}(d_j(e^{2\pi \sqrt{-1}a/f})+(-1)^j d_j (e^{-2\pi \sqrt{-1}a/f})) \sigma_a^{-1},$$
where $\sigma_a \in \Gal(L/\QQ)$ is the automorphism sending $\zeta_f$ to $\zeta_f^a$, we see by computation that
$\alpha_j^\ast\circ {\rm exp}_p^\ast(e_j^+ c_{1-j}(\zeta_f))=e_j^+ \theta_{L/\QQ,S}^\ast(j) \cdot w(-j)$ and this completes the proof.

\subsection{Generalized Kummer congruences and Theorem \ref{cong evidence}(ii)} In the setting of Theorem \ref{cong evidence}(ii) one has $\eta_{L/K,S,T}^{e_j^-}(j)=\theta_{L/K,S,T}(j)$ for any non-positive integer $j$ (see Example \ref{minus example}) and $r_j^{e_j^-}=0$ and the map ${\rm tw}_{j,k}$ coincides with the composite homomorphism
$$e_j^-\ZZ_p[G] \to e_j^- \ZZ/p^n[G] \stackrel{{\rm tw}_{k-j}}{\to} e_k^- \ZZ/p^n[G]$$
which sends each $\sigma \in G$ to $\chi_{\rm cyc}(\sigma)^{k-j}\sigma$. Hence, it is sufficient to show that
$${\rm tw}_j(\theta_{L/K,S,T}(0)) \equiv \theta_{L/K,S,T}(j) \text{ (mod $p^n$)}$$
for any non-positive integer $j$.

But, since ${\rm tw}_j(\delta_{L/K,T}(0)) \equiv \delta_{L/K,T}(j) \text{ (mod $p^n$)},$ the above congruence follows directly from the well-known result due to Deligne-Ribet \cite{DR} that for any element $a$ of ${\rm Ann}_{\ZZ_p[G]}(H^0(L,\QQ_p/\ZZ_p(1)))$ one has ${\rm tw}_j(a \cdot \theta_{L/K,S}(0)) \equiv {\rm tw}_j(a) \theta_{L/K,S}(j) \text{ (mod $p^n$)}.$

\begin{remark} The above argument shows that Conjecture \ref{congruence conjecture} constitutes a wide-ranging extension of the classical Kummer congruences. To see this take $K=\QQ$, $L=\QQ(\mu_{p^n})$, $S=\{\infty,p\}$ and $T=\emptyset$. Then write $\Delta$ for the subgroup of $G$ of order $p-1$ and set
 $e_{\Delta}:=\frac{1}{p-1}\sum_{\sigma \in \Delta} \sigma \in \ZZ_p[G].$

Let $j,k$ be odd negative integers such that $j \equiv k$ (mod $p^{n-1}(p-1)$) and $1-j\not\equiv 0$ (mod $p-1$). The first condition implies that ${\rm tw}_{j-k}$ is the identity map, and the second that $e_{\Delta}H^0(L,\QQ_p/\ZZ_p(1-j))$ vanishes and hence that $e_\Delta H^1(\co_{L,S},\ZZ_p(1-j))$ is $\ZZ_p$-free. The same holds for $e_\Delta H^1(\co_{L,S},\ZZ_p(1-k))$ since $j \equiv k \text{ (mod $p-1$)}$.

By Theorem \ref{cong evidence}(ii), we deduce $e_{\Delta}\theta^\ast_{L/\QQ,S}(j) \equiv e_\Delta \theta^\ast_{L/\QQ,S}(k) \text{ (mod $p^n$)}$ and hence $(1-p^{-j})\zeta(j) \equiv (1-p^{-k})\zeta(k) \text{ (mod $p^n$)},$ where $\zeta(s)$ denotes the Riemann zeta function. This is exactly the formulation of Kummer's congruence.
\end{remark}

\subsection{Solomon's Congruence Conjecture and Theorem \ref{cong evidence}(iii)}\label{solomon section} In this subsection we first review the explicit statement of Solomon's Conjecture and then prove Theorem \ref{cong evidence}(iii).

\subsubsection{}To review Solomon's conjecture we set $L_p:=L\otimes_\ZZ \ZZ_p \simeq \prod_{w \in S_p(L)}L_w$ and $U_{L_p}^1:=(\co_{L}\otimes_\ZZ \ZZ_p)^\times \otimes_\ZZ \ZZ_p \simeq \prod_{w \in S_p(L)}U_{L_w}^1$, where $U_{L_w}^1$ denotes the group of principal units of $L_w$.
As in \S \ref{subsection j=1}, we denote by $\Gamma_{L,S}$ the Galois group of the maximal abelian pro-$p$ extension of $L$ unramified outside $S$.
We denote $\Gal(L^+/K)$ by $G^+$. We write $S_\infty(L)/G=\{w_1,\ldots,w_r\}$. Note that $r=[K:\QQ]$. 

Note that $L/K$ corresponds to $K/k$ in \cite{solomon}. In \cite[\S 2.2]{solomon}, a representative $\{\tau_1,\ldots,\tau_r\}$ of the coset space $G_\QQ/G_K$ is fixed. To fit this choice into our setting, we set
$$\tau_i:=\iota_{w_1}^{-1}\circ \iota_{w_i}, $$
where each $\iota_{w_i}$ is regarded as an embedding $\overline \QQ \hookrightarrow \overline L_{w_i}=\CC$ (see \S \ref{setup}).
In \cite{solomon}, the algebraic closure of $\QQ$ is considered to be in $\CC$. This means that an embedding $\overline \QQ \hookrightarrow \CC$ is fixed. We take $\iota_{w_1}$ for this fixed choice.
Also, an embedding $j: \overline \QQ \hookrightarrow \overline \QQ_p$ is chosen in \cite[\S 2.4]{solomon}. Since we fixed an isomorphism $\CC \simeq \CC_p$, we take $j$ to be the composition $\overline \QQ \stackrel{\iota_{w_1}}{\hookrightarrow} \CC \simeq \CC_p.$

In \cite[Def. 2.14]{solomon}, Solomon defined a map
$$\mathfrak{s}_{L/K,S} \in \Hom_{\ZZ_p[G]^-}\left({\bigwedge}_{\ZZ_p[G]^-}^r U_{L_p}^{1,-},\QQ_p[G]^-\right)$$
by using the zeta value $e^-\theta_{L/K,S}^\ast(1)$ (we will give the definition in the proof of Proposition \ref{solomon stark} below). Solomon's Integrality Conjecture \cite[{${\rm IC}(L/K,S,p)$}]{solomon} is equivalent to the containment
\begin{eqnarray}
\mathfrak{s}_{L/K,S} \in \Hom_{\ZZ_p[G]^-}\left({\bigwedge}_{\ZZ_p[G]^-}^r U_{L_p}^{1,-}, \ZZ_p[G]^-\right). \nonumber
\end{eqnarray}

Next, we explain the formulation of Solomon's Congruence Conjecture. Assume that $\mu_{p^n} \subset L$. In \cite[\S 2.3]{solomon}, Solomon constructed a pairing
$$H_{L/K,S,n-1}: {\bigcap}_{\ZZ_p[G^+]}^r \ZZ_p\co_{L^+,S}^\times \times {\bigwedge}_{\ZZ_p[G]^-}^r U_{L_p}^{1,-} \to \ZZ/p^n[G]^-$$
by using the Hilbert symbol $L_w^\times \times L_w^\times \to \mu_{p^n}$ for $p$-adic places $w\in S_p(L)$ (see the proof of Proposition \ref{solomon stark} below).

Assuming the validity of (\ref{rs inclusion}) for the data $(L/K,S,\emptyset,e^+,0)$ (or equivalently, the $p$-part of the Rubin-Stark Conjecture for the data $(L^+/K,S,\emptyset, S_\infty(K))$, see Example \ref{example rubin stark}), Solomon's Congruence Conjecture \cite[{${\rm CC}(L/K,S,p,n-1)$}]{solomon} asserts that for every $u \in {\bigwedge}_{\ZZ_p[G]^-}^r U_{L_p}^{1,-}$ one has
$$\mathfrak{s}_{L/K,S}(u) \equiv (-1)^r \chi_{\rm cyc}(\tau_1\cdots \tau_r)H_{L/K,S,n-1}(\eta_{L/K,S}^{e^+}(0),u) \text{ (mod $p^n$)}.$$
(The sign $(-1)^r$ appears here since we use $(-1)$-times the usual logarithm
(see (\ref{log}))). 

\subsubsection{}We now prove Theorem \ref{cong evidence}(iii). To do this we note that $H^1(\co_{L,S},\ZZ_p)^- = \Hom_{\rm cont}(\Gamma_{L,S}^-,\ZZ_p)$. The dual of the map ${\rm rec}_p: U_{L_p}^1 \to \Gamma_{L,S}$ that sends $u$ to $\prod_{w \in S_p(L)} {\rm rec}_w(u),$
where ${\rm rec}_w$ denotes the local reciprocity map at $w$, therefore induces a homomorphism
\begin{multline*}
{\rm rec}^\ast_p : {\bigcap}_{\ZZ_p[G]^-}^r H^1(\co_{L,S},\ZZ_p)^- \to {\bigcap}_{\ZZ_p[G]^-}^r \Hom_{\ZZ_p}(U_{L_p}^{1,-},\ZZ_p)\\
\simeq \Hom_{\ZZ_p[G]^-} \left({\bigwedge}_{\ZZ_p[G]^-}^r U_{L_p}^{1,-},\ZZ_p[G]^-\right)^\#,
\end{multline*}
where for a $\ZZ_p[G]$-module $M$ we denote by $M^\#$ the module $M$ on which $G$ acts via the involution $\sigma \mapsto \sigma^{-1}$, and the last isomorphism follows from
$$\Hom_{\ZZ_p}(U_{L_p}^{1,-},\ZZ_p) \simeq \Hom_{\ZZ_p[G]^-}(U_{L_p}^{1,-},\ZZ_p[G]^-)^\#; \ f \mapsto \sum_{\sigma\in G}f(\sigma(\cdot))\sigma^{-1}$$
and the definition of $r$-th exterior bidual. The proof of Theorem \ref{cong evidence}(iii) is thus reduced to the following result.

\begin{proposition} \label{solomon stark}One has ${\rm rec}_p^\ast (\eta_{L/K,S}^{e^-}(1))=(-1)^r\mathfrak{s}_{L/K,S}$ and
\begin{multline*}
{\rm rec}_p^\ast ( {\rm tw}_{0,1}(a))= \chi_{\rm cyc}(\tau_1\cdots \tau_r) H_{L/K,S,n-1}(a, \cdot)\\ \text{ in }\Hom_{\ZZ_p[G]^-}\left({\bigwedge}_{\ZZ_p[G]^-}^r U _{L_p}^{1,-},\ZZ/p^n[G]^- \right)^\#
\end{multline*}
for every $a\in {\bigcap}_{\ZZ_p[G]^+}^r H^1(\co_{L,S},\ZZ_p(1))^+={\bigcap}_{\ZZ_p[G^+]}^r \ZZ_p \co_{L^+,S}^\times$.
\end{proposition}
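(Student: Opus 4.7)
The plan is to establish both identities by unwinding Solomon's explicit constructions of $\mathfrak{s}_{L/K,S}$ and of the pairing $H_{L/K,S,n-1}$ and comparing them with the definitions of $\lambda_1$ in \S\ref{subsection j=1} and of $\mathrm{tw}_{0,1}$ in \S\ref{statement of conjs}. The common technical input in both identities is a compatibility between the local reciprocity map $\mathrm{rec}_w\colon U_{L_w}^1 \to G_{L_w}^{\rm ab}$ and cup product in Galois cohomology, which ultimately comes from local class field theory and is, in the second case, essentially a reformulation of the classical relationship between Hilbert symbols and cup products.

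For the first identity, I would begin by recalling that Solomon's map $\mathfrak{s}_{L/K,S}$ is defined using $p$-adic logarithms of local units paired against the zeta value $e^-\theta^\ast_{L/K,S}(1)$, whereas $\eta_{L/K,S}^{e^-}(1)$ is characterized by $\lambda_1(\eta_{L/K,S}^{e^-}(1)) = e^-\theta^\ast_{L/K,S}(1)\cdot \bigwedge_{w\in W} w(-1)$ together with the explicit construction of $\lambda_1$ via the exact sequence (\ref{j=1 iso}). The latter involves the linear dual of the $p$-adic exponential $\exp_p\colon L_w \to \QQ_p \mathcal{O}_{L_w}^\times$ and of the Dirichlet regulator, and dualizing via the reciprocity map $\mathrm{rec}_p$ identifies $H^1(\mathcal{O}_{L,S},\ZZ_p)^-$ with a lattice in $(U_{L_p}^{1,-})^\ast$ via $\exp_p^\ast$. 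Since $\log_p$ and $\exp_p$ are mutually inverse on principal units, a direct diagram chase comparing the two constructions will yield the equality, with the sign $(-1)^r$ coming from our normalization $-\sum_w \log|\cdot|_w w$ of the Dirichlet regulator in (\ref{log}) acting on each of the $r$ wedge slots, matching the parenthetical remark in Solomon's statement.

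For the second identity, fix $a\in\bigcap^r_{\ZZ_p[G^+]}\ZZ_p\mathcal{O}_{L^+,S}^\times$ and $u=u_1\wedge\cdots\wedge u_r\in\bigwedge^r_{\ZZ_p[G]^-}U_{L_p}^{1,-}$. The left-hand side $\mathrm{rec}_p^\ast(\mathrm{tw}_{0,1}(a))(u)$ unfolds, by the definition of $\mathrm{tw}_{0,1}$, into (essentially) the determinant of the pairings $\langle a_i \cup \xi_i^{\otimes -1},\mathrm{rec}_p(u_j)\rangle$. The key step is the classical identification that for each $w\in S_p(L)$ the composite
\[
H^1(L_w,\ZZ/p^n(1))\otimes H^1(L_w,\ZZ/p^n)\to H^2(L_w,\ZZ/p^n(1))\xrightarrow{\mathrm{inv}_w}\ZZ/p^n
\]
of cup product with the local invariant coincides with the Hilbert symbol $(\cdot,\cdot)_{p^n,w}$ under the identification $\mu_{p^n}\simeq\ZZ/p^n$. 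Using the global reciprocity law $\sum_w \mathrm{inv}_w = 0$ to decompose the global cup product as a sum of local Hilbert symbols at $w\in S_p(L)$ (the other local contributions vanishing since $\mu_{p^n}\subset L$), one obtains $H_{L/K,S,n-1}(a,u)$ up to the factor $\chi_{\rm cyc}(\tau_1\cdots\tau_r)$. This factor arises because Solomon normalizes using a single choice of $p^n$-th root of unity at $w_1$, whereas $\mathrm{tw}_{0,1}$ uses $\xi_i = \iota_{w_i}^{-1}(e^{2\pi\sqrt{-1}/p^n}) = \tau_i^{-1}(\xi_1) = \xi_1^{\chi_{\rm cyc}(\tau_i)^{-1}}$ in the $i$-th slot, so that switching conventions multiplies by $\prod_{i=1}^r \chi_{\rm cyc}(\tau_i) = \chi_{\rm cyc}(\tau_1\cdots\tau_r)$.

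The main obstacle, as is common in this subject, will be the careful bookkeeping of signs, twists, and normalizations to ensure that $(-1)^r$ and $\chi_{\rm cyc}(\tau_1\cdots\tau_r)$ emerge with the correct multiplicities. In particular, reconciling the dual of the Bloch-Kato exponential in (\ref{j=1 iso}) with the $p$-adic logarithm appearing in Solomon's work requires pinning down sign conventions for $\log_p$ and for the regulator in (\ref{log}); while in part (ii) one must verify that converting between Solomon's uniform choice of primitive root of unity and our varying $\xi_i$ produces precisely a single factor of $\chi_{\rm cyc}(\tau_1\cdots\tau_r)$ (independent of the permutation arising in the determinant expansion). Once these normalizations are reconciled, both identities will follow by direct calculation.
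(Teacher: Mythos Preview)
Your plan is correct and matches the paper's approach: both parts are proved by unwinding Solomon's explicit definitions of $\mathfrak{s}_{L/K,S}$ (via $p$-adic logarithms and the zeta value) and of $H_{L/K,S,n-1}$ (via Hilbert symbols), and comparing them directly with the constructions of $\lambda_1$ and $\mathrm{tw}_{0,1}$; the paper reduces the first assertion to the elementary check $\iota_{w_i}\circ\alpha\circ\beta(w_i(1))=\pi\sqrt{-1}$ and the second to the identity $\xi_i^\ast=\chi_{\rm cyc}(\tau_i)\,\xi_1^\ast$, exactly as you anticipate. One small correction: the global reciprocity law $\sum_w\mathrm{inv}_w=0$ plays no role --- the decomposition into local Hilbert symbols at $p$-adic places comes straight from the definition $\mathrm{rec}_p=\prod_{w\in S_p(L)}\mathrm{rec}_w$, and the paper never passes through $H^2$ or local invariants but simply computes $\xi_i^\ast\bigl(\mathrm{rec}_p(u)\cdot a^{1/p^n}/a^{1/p^n}\bigr)$ directly from the definition of the Hilbert symbol.
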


\begin{proof}
We review the definition of $\mathfrak{s}_{L/K,S}$. For an integer $i$ with $1\leq i \leq r$, we define
$$\log_p^{(i)}: U_{L_p}^{1,-} \to \overline \QQ_p$$
by $\log_p^{(i)}(u):=\log_p(j(\tau_i u))=\log_p(\iota_{w_i}(u)),$ where $\log_p$ denotes the $p$-adic logarithm defined on $\{a\in  \overline \QQ_p \mid |a-1|_p<1\}$. We define
$${\rm Log}_p^{(i)}:=\sum_{\sigma \in G}\log_p^{(i)} (\sigma (\cdot ))\sigma^{-1} \in \Hom_{\ZZ_p[G]^-}(U_{L_p}^{1,-},\overline \QQ_p[G]^-).$$
Put
$$a_{L/K,S}^-:=\iota_{w_1}^{-1}\left( \left(\frac{\sqrt{-1}}{\pi}\right)^r e^-\theta_{L/K,S}^\ast(1) \right)\in \overline \QQ[G]^-.$$
We define $\mathfrak{s}_{L/K,S}$ by
$$
\mathfrak{s}_{L/K,S}:=j(a_{L/K,S}^-) \cdot {\rm Log}_p^{(1)} \wedge \cdots \wedge {\rm Log}_p^{(r)}
\in \Hom_{\ZZ_p[G]^-}\left({\bigwedge}_{\ZZ_p[G]^-}^r U_{L_p}^{1,-},\overline \QQ_p[G]^-\right)^\#.
$$
Solomon proved that the image of $\mathfrak{s}_{L/K,S}$ lies in $\QQ_p[G]^-$, and that $\mathfrak{s}_{L/K,S}$ is independent of the choice of $j$ (see \cite[Prop. 2.16]{solomon}).

Now we prove the first assertion of the proposition.
By the definition of $\eta_{L/K,S}^{e^-}(1)$, it is sufficient to prove that the composition map
\begin{eqnarray}
{\bigwedge}_{\CC_p[G]^-}^r \CC_p Y_L(1) \stackrel{\sim}{\to} {\bigwedge}_{\CC_p[G]^-}^r (\CC_p  \otimes_\QQ L)^- \stackrel{\exp_p}{\stackrel{\sim}{\to}} \CC_p {\bigwedge}_{\ZZ_p[G]^-}^r U_{L_p}^{1,-} \stackrel{(-1)^{r}\mathfrak{s}_{L/K,S}}{\to} \CC_p[G]^- \nonumber
\end{eqnarray}
coincides with $ e^- \theta_{L/K,S}^\ast(1) \cdot w_1(1)^\ast \wedge \cdots \wedge w_r(1)^\ast$ in $\Hom_{\CC_p[G]^-}(\bigwedge_{\CC_p[G]^-}^r \CC_p Y_L(1), \CC_p[G]^-)^\#$, where the first map is induced by
$$\CC_p Y_L(1)\stackrel{(\ref{beta})}{\simeq}   \CC_p H_L(1)^+  \stackrel{(\ref{period map})}{\simeq}  (\CC_p\otimes_\QQ L)^-  ,$$
and the second by the $p$-adic exponential map. We denote by $\beta$ and $\alpha$ the maps induced by (\ref{beta}) and (\ref{period map}) respectively.

Noting that the equality $(-1)^{r}j(a_{L/K,S}^-)= (\pi \sqrt{-1})^{-r}e^-\theta_{L/K,S}^\ast(1)$ holds in $\CC_p[G]^-$ (via the isomorphism $\CC \simeq \CC_p$), we see that
$$(-1)^{r}\mathfrak{s}_{L/K,S}= (\pi \sqrt{-1})^{-r}e^-\theta_{L/K,S}^\ast(1) \cdot  {\rm Log}_p^{(1)}\wedge \cdots \wedge  {\rm Log}_p^{(r)}.$$
Hence, it is sufficient to prove that $ \iota_{w_i} \circ \alpha \circ \beta (w_i(1))= \pi \sqrt{-1} $, but this is straightforward to check by definition.
Thus we have proved the first assertion of the proposition.

To prove the second assertion we review the definition of $H_{L/K,S,n-1}$. In \cite[\S 2.3]{solomon}, Solomon defined a map $f_u \in \Hom_{\ZZ}(\co_{L^+,S}^\times,\ZZ/p^n)$ for any $u\in U_{L_p}^1$ as follows. For each $w\in S_p(L)$, we denote the Hilbert symbol
$$L_w^\times \times L_w^\times \to \mu_{p^n}; \ (x,y)\mapsto \frac{{\rm rec}_w(y) x^{1/p^n}}{x^{1/p^n}}$$
by $(x,y)_{w,n}$. The map $f_u$ is then defined by setting $f_u(a):=\sum_{w\in S_p(L)}\xi_1^\ast((a,u_w)_{w,n}),$
where $u_w$ is the $w$-component of $u \in U_{L_p}^1$ and $\xi_1^\ast$ is the isomorphism $\mu_{p^n} \stackrel{\sim}{\to} \ZZ/p^n$ sending $\xi_1$ to $1.$ (For the definition of $\xi_i$, see (\ref{def xi}).)

We define the ring isomorphism
$$\chi_{\rm cyc}^\# : \ZZ/p^n[G^+] \stackrel{\sim}{\to} \ZZ/p^n[G]^-; \ \sigma\mapsto 2^{-1} \sum_{\widetilde \sigma}\chi_{\rm cyc}(\widetilde \sigma)\widetilde \sigma^{-1},$$
where $\widetilde \sigma\in G$ runs over the lifts of $\sigma\in G^+$.
The pairing $H_{L/K,S,n-1}$ is defined by
$$H_{L/K,S,n-1}(a,u_1\wedge\cdots\wedge u_r):=2^r \chi_{\rm cyc}^\#((\widetilde F_{u_1}\wedge\cdots\wedge \widetilde F_{u_r} )(a)),$$
where $\widetilde F_{u_i}:=\sum_{\sigma \in G^+} \widetilde f_{u_i}(\sigma (\cdot))\sigma^{-1}$, and $\widetilde f_{u_i} \in \Hom_{\ZZ_p}(\co_{L^+,S}^\times,\ZZ_p)$ is a lift of $f_{u_i}$.

We compare Solomon's pairing $H_{L/K,S,n-1}$ with the twisting map
\begin{multline*}
{\rm tw}_{0,1}: {\bigcap}_{\ZZ_p[G^+]}^r \ZZ_p\co_{L^+,S}^\times ={\bigcap}_{\ZZ_p[G]^+}^r H^1(\co_{L,S},\ZZ_p(1))^+ \\
\to {\bigcap}_{\ZZ/p^n[G]^-}^r H^1(\co_{L,S},\ZZ/p^n)^-=\Hom_{\ZZ_p[G]^-} \left({\bigwedge}_{\ZZ_p[G]^-}^r \Gamma_{L,S}^-,\ZZ/p^n[G]^- \right)^\#.
\end{multline*}
For each $i$ with $1\leq i \leq r$, we define $h_i: \Gamma_{L,S} \to \Hom_\ZZ(\co_{L,S}^\times,\ZZ/p^n)$ by
$$h_i(\gamma)(a):=\xi_i^\ast\left( \frac{\gamma a^{1/p^n}}{a^{1/p^n}} \right),$$
where $\xi_i^\ast: \mu_{p^n}\stackrel{\sim}{\to} \ZZ/p^n$ is defined by $\xi_i \mapsto 1$. Put $H_i:=\sum_{\sigma \in G^+}h_i(\sigma(\cdot))\sigma^{-1}$. One checks that
$${\rm tw}_{0,1}(a)(\gamma_1\wedge \cdots \wedge \gamma_r)=2^r \chi_{\rm cyc}^\# ((\widetilde H_1 \wedge \cdots \wedge\widetilde H_r)(a)),$$
where $\widetilde H_i\in \Hom_{\ZZ[G^+]}(\co_{L^+,S}^\times,\ZZ[G^+])$ is a lift of $H_i$. Note also that for all $a\in \co_{L^+,S}^\times$ and $u \in U_{L_p}^{1,-}$ one has
%
\begin{multline*}
h_{i}({\rm rec}_p(u))(a)=\xi_i^\ast\left(\frac{{\rm rec}_p(u)a^{1/p^n}}{a^{1/p^n}}\right) = \xi_i^\ast\left(\prod_{w \in S_p(L)}\frac{{\rm rec}_w(u_w) a^{1/p^n}}{a^{1/p^n}}\right) \\
=\sum_{w\in S_p(L)}\xi_i^\ast((a,u_w)_{w,n}) = \sum_{w\in S_p(L)}\xi_1^\ast(\tau_i (a,u_w)_{w,n}) =\chi_{\rm cyc}(\tau_i)f_u(a). \end{multline*}

Hence we have
$${\rm rec}_p^\ast ( {\rm tw}_{0,1}(a))= \chi_{\rm cyc}(\tau_1\cdots \tau_r) H_{L/K,S,n-1}(a, \cdot) $$
in $\Hom_{\ZZ_p[G]^-}({\bigwedge}_{\ZZ_p[G]^-}^r U _{L_p}^{1,-},\ZZ/p^n[G]^-)^\#$, as required.
\end{proof}


\begin{thebibliography}{99999999}



\bibitem{besser} A. Besser,
\newblock Syntomic regulators and $p$-adic integration I: rigid syntomic regulators,
\newblock Israel J. Math. {\bf 120} (2000) 291-334.
























\bibitem{BFetnc} D. Burns, M. Flach,
\newblock Tamagawa numbers for motives with (non-commutative) coefficients,
\newblock Doc.\ Math.\ \textbf{6} (2001) 501-570.

\bibitem{BFetnc2} D. Burns, M. Flach,
\newblock Tamagawa numbers for motives with (non-commutative) coefficients II,
\newblock Amer. J. Math. \textbf{125} (2003) 475-512.


\bibitem{BG} D. Burns, C. Greither,
\newblock On the Equivariant Tamagawa Number Conjecture for Tate motives,
\newblock Invent. math. $\mathbf{153}$ (2003) 303-359.

\bibitem{bks1} D. Burns, M. Kurihara, T. Sano,
\newblock On zeta elements for $\mathbb{G}_m$,
\newblock Doc. Math. \textbf{21} (2016) 555-626.

\bibitem{bks2} D. Burns, M. Kurihara, T. Sano,
\newblock Iwasawa theory and zeta elements for $\mathbb{G}_m$,
\newblock submitted for publication





\bibitem{CKPS} T. Chinburg, M. Kolster, G. Pappas, V. Snaith,
\newblock Galois structure of $K$-groups of rings of integers,
\newblock $K$-theory {\bf 14} (1998) 319-369.



\bibitem{CR} C. W. Curtis, I. Reiner,
\newblock Methods of Representation Theory, Vol. I and II,
\newblock John Wiley and Sons, New York, 1987.





\bibitem{DR} P. Deligne and K. Ribet,
\newblock Values of L-functions at negative integers over totally real fields,
\newblock Invent. Math. {\bf 159} (1980), 227-286.






\bibitem{flach2part} M. Flach,
\newblock On the cyclotomic main conjecture for the prime $2$,
\newblock J. reine angew. Math. $\mathbf{661}$ (2011) 1-36.



\bibitem{FK} T. Fukaya, K. Kato,
\newblock A formulation of conjectures on $p$-adic zeta functions in non-commutative Iwasawa theory,
\newblock Proc. St. Petersburg Math. Soc. {\bf XII} (2006) 1-86.




\bibitem{GreitherK2} C. Greither, M. Kurihara,
\newblock Tate sequences and Fitting ideals of Iwasawa modules,
\newblock to appear in St.Petersburg Math. J. (Vostokov volume)








\bibitem{HK} A. H\"uber, G. Kings,
\newblock Bloch-Kato conjecture and main conjecture of Iwasawa theory for Dirichlet characters,
\newblock Duke. Math. J. {\bf 119} (2003) 393-464.

\bibitem{HW} A. H\"uber, J. Wildeshaus,
\newblock Classical motivic polylogarithm according to Beilinson and Deligne,
\newblock Doc. Math. \textbf{3} (1998) 27-133.

\bibitem{iwasawa} K. Iwasawa,
\newblock Explicit formulas for the norm residue symbols,
\newblock J. Math. Soc. Japan, (1968) 151-164.










\bibitem{katoiwasawa} K. Kato,
\newblock Iwasawa theory and $p$-adic Hodge theory,
\newblock Kodai Math. J. {\bf 16} no. 1 (1993) 1-31.

\bibitem{katolecture} K. Kato,
\newblock Lectures on the approach to Iwasawa theory of Hasse-Weil $L$-functions via $B_{dR}$, Part I,
\newblock In: Arithmetical Algebraic Geometry (ed. E. Ballico), Lecture Notes in Math. 1553 (1993) 50-163, Springer, New York, 1993.












\bibitem{MRGm} B. Mazur, K. Rubin,
\newblock Refined class number formulas for $\GG_{m}$,
\newblock J. Th. Nombres Bordeaux {\bf 28} (2016) 185-211.







\bibitem{NSW} J. Neukirch, A. Schmidt, K. Wingberg,
\newblock Cohomology of number fields,
\newblock Springer Verlag, 2000.





\bibitem{popescuBC} C. D. Popescu,
\newblock Base change for stark-type conjectures ``over $\ZZ$",
\newblock J. Reine Angew. Math. {\bf 542} (2002) 85-111.




\bibitem{rs} X. Roblot, D. Solomon,
\newblock Testing the Congruence Conjecture for Rubin-Stark Elements,
\newblock J. Number Theory {\bf 130} (2010) 1374-1398.

\bibitem{rubinstark} K. Rubin,
\newblock A Stark Conjecture `over $\bz$' for abelian $L$-functions with multiple zeros,
\newblock Ann. Inst. Fourier \textbf{46} (1996) 33-62.


\bibitem{sano} T. Sano,
\newblock Refined abelian Stark conjectures and the equivariant leading term conjecture of Burns,
\newblock Compositio Math. {\bf 150} (2014) 1809-1835.

\bibitem{ps} P. Schneider,
\newblock \"Uber gewisse Galoiscohomologiegruppen,
\newblock Math. Zeit. {\bf 168}\! (1979) 181-205.






\bibitem{solomon} D. Solomon,
\newblock Abelian $L$-functions at $s=1$ and explicit reciprocity for Rubin-Stark elements,
\newblock Acta Arithmetica 143.2 (2010) 145-189.




\bibitem{stark4} H. M. Stark,
\newblock $L$-functions at $s = 1$ IV: First derivatives at $s = 0$,
\newblock Advances in Math. {\bf 35} (1980) 197-235.






\bibitem{tatebook} J. Tate,
\newblock Les Conjectures de Stark sur les Fonctions $L$ d'Artin
en $s=0$ (notes par D. Bernardi et N. Schappacher),
\newblock Progress in Math., \textbf{47}, Birkh\"auser, Boston, 1984.









\end{thebibliography}
\end{document}